\newtheorem{thm}{Theorem}
\newtheorem{cnj}[thm]{Conjecture}
\newtheorem{cor}[thm]{Corollary}
\newtheorem{fct}[thm]{Fact}
\newtheorem{lem}[thm]{Lemma}
\newtheorem{prb}[thm]{Problem}
\newtheorem{prp}[thm]{Proposition}
\def\a{{\alpha}}
\def\p{{\pi}}
\def\s{{\sigma}}
\def\t{{\tau}}
\def\cA{{\cal A}}
\def\cB{{\cal B}}
\def\cC{{\cal C}}
\def\cP{{\cal P}}
\def\sP{{\sf P}}
\def\zN{{\mathbb N}}
\def\So{{\mathfrak S}} 
\def\dC{{\dot{C}}}
\def\dD{{\dot{D}}}
\def\dF{{\dot{F}}}
\def\hC{{\hat{C}}}
\def\cost{{\sf cost}}
\def\deg{{\sf deg}}
\def\diam{{\sf diam}}
\def\dist{{\sf dist}}
\def\supp{{\sf supp}}
\def\sP2P{{$\Pi_2^{\sP}$}}
\def\sse{{\ \subseteq\ }}
\def\sqr#1#2{{\vcenter{\hrule height.#2pt
        \hbox{\vrule width.#2pt height#1pt \kern#1pt
                \vrule width.#2pt}
        \hrule height.#2pt}}}
\def\gbox{{\mathchoice\sqr45\sqr45\sqr{2.1}3\sqr{1.5}3}}
\definecolor{brwn}{RGB}{140, 70, 20}
\definecolor{gren}{RGB}{  0,140, 10}
\definecolor{purp}{RGB}{147, 51,255}
\definecolor{mnt}{RGB}{167,230,215}
\newcommand{\ma}[1]{\textcolor{blue}{\sf{#1}}}
\newcommand{\vb}[1]{\textcolor{gren}{\sf{#1}}}
\newcommand{\gh}[1]{\textcolor{brwn}{\sf{#1}}}
\newcommand{\up}[1]{\textcolor{red}{\sf{#1}}}
\newcommand{\yb}[1]{\textcolor{purp}{\sf{#1}}}
\newcommand{\rev}[1]{\textcolor{black}{#1}}
\newcommand{\ed}[1]{\textcolor{black}{#1}}
\begin{document}

\title{Target Pebbling in Trees}

\author{
Matheus Adauto
    \thanks{
        Department of Mathematics and Applied Mathematics, Virginia Commonwealth University, Richmond, Virginia, USA}
    \thanks{
        Instituto de Computa\c{c}\~{a}o, Universidade Federal do Rio de Janeiro, Rio de Janeiro, Brazil}
    \thanks{
        \texttt{adauto@ic.ufrj.br, adautomn@vcu.edu}}
\and
Viktoriya Bardenova
    \footnotemark[1]
    \thanks{
        \texttt{bardenovav@vcu.edu}}
\and
Yunus Bidav
    \footnotemark[1]
    \thanks{
        \texttt{bidavye@vcu.edu}}
\and
Glenn Hurlbert 
    \footnotemark[1]
    \thanks{
        \texttt{ghurlbert@vcu.edu}}
}


\maketitle

\begin{abstract}
Graph pebbling is a game played on graphs with pebbles on their vertices. 
A pebbling move removes two pebbles from one vertex and places one pebble on an adjacent vertex. 
A configuration $C$ is a supply of pebbles at various vertices of a graph $G$, and a distribution $D$ is a demand of pebbles at various vertices of $G$.
The $D$-pebbling number, $\pi(G, D)$, of a graph $G$ is defined to be the minimum number $m$ such that every configuration of $m$ pebbles can satisfy the demand $D$ via pebbling moves.
The special case in which $t$ pebbles are demanded on vertex $v$ is denoted $D=v^t$, and the \textit{$t$-fold pebbling number}, $\pi_{t}(G)$, equals  $\max_{v\in G}\pi(G,v^t)$.
It was conjectured by Alc\'on, Gutierrez, and Hurlbert that the pebbling numbers of chordal graphs forbidding the pyramid graph can be calculated in polynomial time.
Trees, of course, are the most prominent of such graphs.
In 1989, Chung determined $\p_t(T)$ for all trees $T$.
In this paper, we provide a polynomial-time algorithm to compute the pebbling numbers $\p(T,D)$ for all distributions $D$ on any tree $T$, and characterize maximum-size configurations that do not satisfy $D$.
\end{abstract}
\bigskip

\noindent
{\bf Keywords:} {\it graph pebbling, target pebbling, paths, trees}

\noindent
{\bf MSC 2020:} 05C57 (Primary), 05C05, 90B10, 91A43 (Secondary)

\newpage

\section{Introduction}
\label{s:Introduction}

Graph pebbling is a mathematical game or puzzle that involves moving pebbles along the edges of a connected graph, subject to certain rules. 
The objective of the game is to place a certain number of pebbles on specific vertices of the graph from any sufficiently large configuration of pebbles. 
The original application of graph pebbling involved only the case of placing a single pebble on a certain target. 
As a method to prove results in this realm, the problem was necessarily generalized in \cite{Chung} to place multiple pebbles on a single target.
Similarly, the problem was generalized in \cite{CCFHPST05,HHHtPebb} to placing pebbles on multiple targets.
Good resources on this topic include \cite{AlcoHurlPowers,HurlKent,HurlSedd,MilaClar}, and we list more specific references in Subsections \ref{ss:history} and \ref{ss:motivation}, below.
In this paper, we provide a formula for \rev{the} target pebbling number of a tree, along with an algorithm for calculating it that runs in polynomial time.

We begin with necessary graph theory and pebbling definitions in Subsections \ref{ss:graphdefs} and \ref{ss:pebbdefs}. 
In Subsections \ref{ss:history} and  \ref{ss:motivation} we discuss the relevant history and motivation for our results. 
Subsection \ref{ss:Our Results} contains our two main results regarding the formula and algorithm mentioned above, namely, Theorems \ref{t:Trees} and \ref{t:Algo}.
(It takes us time to develop the concepts and notation necessary to be able to state the formula in Corollary \ref{c:TargetPebbTree}.)
Then we prove these results in Section \ref{s:Proofs} and conclude in Section \ref{s:Conclusion} with additional comments and questions.

\subsection{Graph Definitions}
\label{ss:graphdefs}

In this paper a \textit{graph} $G(V,E)$ is simple and connected.
We write $V$ in place of $V(G)$ unless different graphs need to be distinguished.
The \textit{degree} $\deg_G(v)$ of a vertex $v$ in a graph $G$, is the number of edges incident to $v$.
The {\it indegree} $\deg_G^-(v)$ of a vertex $v$ in a digraph $G$ equals the number of directed edges $uv$ in $G$, and the {\it outdegree} $\deg_G^+(v)$ of a vertex $v$ in a digraph $G$ equals the number of directed edges $vu$ in $G$.
In a connected graph $G$ the distance from $u$ to $v$, denoted $\dist_G(u,v)$ is the length of the shortest $u,v$-path in $G$. 
We ignore the subscripts in the above notations when the graph $G$ is understood.
The \textit{diameter} of a graph $G$ is the maximum distance between any pair of vertices $u$ and $v$, $\diam (G)=\max_{u,v \in V} \dist (u,v)$.

A \textit{tree} is a connected acyclic graph. 
A \textit{leaf} is a vertex of degree 1; every other vertex is {\it interior}. 
Let the set of leaves of a tree $T$ be denoted by $L(T)$.
A graph is {\it chordal} if it has no induced cycle of length 4 or more.
A \textit{simplicial} vertex in a graph is a vertex whose neighbors induce a clique. 
We will denote the set of simplicial vertices by $S(G)$; thus $S(T)=L(T)$ when $T$ is a tree.
Chordal graphs are characterized recursively by either being complete or having a simplicial vertex whose removal leaves a chordal graph.

A \textit{path partition} of a tree $T=(V,E)$ is a set of paths $\cP=\{P_1,\ldots,P_\ell\}$, such that
the paths are edge-disjoint and the union of the paths covers all edges of the tree $V(T)=\cup_{i=1}^ {\ell}V(P_i)$.
For a multiset $a=\{a_1,\ldots,a_m\}$ of nonnegative integers, we will label its indices so that $a_1\ge a_2\ge \cdots\ge a_m$.
For two different multisets $a=\{a_1\ge a_2\ge \cdots\ge a_m\}$ and $b=\{b_1\ge b_2\ge\cdots\ge b_m\}$, we write $a\succ b$ if there is some $j$ such that $a_i=b_i$ for all $i<j$ and $a_j>b_j$. 
When $a\succ b$ we say that $a$ \textit{majorizes} $b$.
A \textit{maximum path partition} of a tree is a path partition whose multiset of path lengths is not majorized by that of any other path partition.
The following algorithm provides maximum path partition $\cP=\{P_1,\ldots,P_\ell\}$ of a rooted tree $(T,r)$ \cite{Chung}.
Set $T_0 = \{r\}$, define $\ell$ to be the number of leaves in $T$ that are different from $r$, and repeat the following two steps for each $1\le i\le \ell$:
(a) define $P_i$ to be a maximum length path in $T$ with one endpoint in $T_{i-1}$, and 
(b) define $T_i = T_{i-1}\cup P_i$.

\subsection{Pebbling Definitions}
\label{ss:pebbdefs}

A {\it pebbling function} $F$ is any function $F:V\rightarrow\zN$.
We define its {\it support} to be $\supp(F)=\{v\in V\mid F(v)>0\}$, with $s(F)=|\supp(F)|$.
For a pebbling function we write $\dF=\{v^{F(v)}\}_{v\in V}$ for its multiset description, and if $\supp(F)=1$ then we ignore the set braces; furthermore, if $F=\{v^t\}$ we write $v^t$ instead of $\{v^t\}$, and if $t=1$ we write $v$ instead of $v^1$.
The {\it size} of $F$ equals $|F|=\sum_{v\in V}F(v)$.
We use the shorthand notation  $\min F = \min_v F(v)$.  
A pebbling function $F$ is called {\it positive} if $\min F>0$, and {\it stacked} if there is a unique $v\in V$ with $F(v)>0$.
For any two functions $F:V\rightarrow\zN$ and $F':V\rightarrow\zN$ the sum $\dF+\dF'=\{v^{F(v)+F'(v)}\}_{v\in V}$. 
In particular, we can view a vertex $v$ as the pebbling function $\{v\}$, so that the sum $F+v$ is obtained from the function $F$ by increasing the value $F(v)$ by 1.
A {\it configuration} $C$ is a pebbling function whose value $C(v)$ represents the number of pebbles at vertex $v$. 
A {\it target} $D$ is a pebbling function whose value $D(v)$ represents the demand of pebbles at vertex $v$.
\rev{The notion of a pebbling function was first introduced in \cite{AlcoHurlPowers}.
One point of doing so is to note that both configurations and targets are pebbling functions; albeit playing different roles (i.e., supply versus demand).
In this way, a set of unified adjectives and notations can be used for both.
Technically, however, it allows one to consider the sum of a configuration and a target, which plays a critical role in the proof of Theorem \ref{t:Path}.}

For adjacent vertices $u$ and $v$, the {\it pebbling step} $u\mapsto v$ consists of removing two pebbles from $u$ and placing one pebble on $v$.
For a configuration $C$ and target $D$, we say that $C$ is $D$-\textit{solvable} (or that there is a $(C, D)$-solution, or that $G$ has a $(C, D)$-solution) if some sequence of pebbling steps places at least $D(v)$ pebbles on each vertex $v$, otherwise $C$ is $D$-\textit{unsolvable}. 
For a solution $\s$ that solves $\{v_1, \ldots, v_k\} \subseteq \dD$, $C[\s]$ denotes the sub-configuration of pebbles used in $\sigma$.
A $(C,D)$-solution $\s=\s_1\ldots\s_m$ is {\it minimal} if, for every $i$, $\s-\s_i$ is not a $(C,D)$-solution. 
For a $D$-unsolvable configuration $C$, we say that $C$ is $D$-\textit{maximal} if adding a single additional pebble to any vertex of $G$ yields a $D$-solvable configuration. 

A configuration $C$ is called $D$-\textit{extremal} if it is $D$-maximal of maximum size.
The $D$-pebbling number, $\pi(G, D)$, of a graph $G$ is defined to be the minimum number $m$ such that $G$ is $(C, D)$-solvable whenever $|C| \geq m$; that is, it is one more than the size of a $D$-extremal configuration. 
The \textit{$t$-fold pebbling number}, $\pi_{t}(G)$, equals $\max_{v\in G}\pi(G,v^t)$; when $t=1$, we simply write $\pi(G)$.
Also, $C$ is  {\it solvable} if it is $r$-solvable for all $r$.

For a $(C,D)$-solution $\s$ and vertex $v\in\dD$, we write $\s_v$ for the set of pebbling steps used to solve $v$; thus $\s$ can be partitioned into $\s_v$ over all $v\in\dD$.
A vertex $v$ is \textit{big} in a configuration $C$ if $C(v) \geq 2$.
$B(C)$ denotes the set of big vertices of a configuration $C$.
A \textit{potential vertex} of a configuration $C$ with respect to a target $D$ is any vertex $v\in B(C)\cup (\supp(C)\cap\dD)$.

A pebbling step $u_i\mapsto v_i$ is $r$-greedy if $\dist(v_i,r)<\dist(u_i,r)$, and a sequence of pebbling steps is $r$-greedy if each of its steps is $r$-greedy.
A $(C,D)$-solution $\s$ is {\it greedy} if, for each $v\in\dD$, $\s_v$ is $v$-greedy.
A configuration $C$ is $D$-\textit{greedy} if it has a greedy $D$-solution.
A graph $G$ is $D$-{\it greedy} if for every configuration $C$ with $|C|\ge\p(G,D)$, there is a greedy $(C,D)$-solution.
For a $(C,D)$-solution $\s = \{u_i\mapsto v_i\}_{i\in I}$, we define its {\it solution subgraph} $G_\s$ to be the digraph with vertices $\cup_i \{u_i,v_i\}_{i\in I}$ and arcs $\cup_i \{u_iv_i\}_{i\in I}$.
The \textit{cost} of $\s$ equals $\cost(\s)=|I|+|D|$; i.e. the number of pebbles used by $\s$.

\subsection{History}
\label{ss:history}

The basic lower and upper bounds for every graph are as follows.

\begin{fct}[\cite{ChanGod,Chung}]
\label{f:BasicBds}
For every graph $G$ we have $\max\{n,$ $2^{\diam(G)}\} \le \pi(G) \leq (n(G)-\diam(G))(2^{\diam(G)}-1)+1$.
\end{fct}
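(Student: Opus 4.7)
The plan is to prove the lower and upper bounds separately using standard pebbling arguments. For the lower bound $\pi(G) \ge n$, fix any target $v$ and exhibit the configuration $C$ placing one pebble on each of the $n - 1$ non-target vertices; since every vertex holds at most one pebble, no pebbling step is available, so $C$ is $v$-unsolvable of size $n - 1$. For $\pi(G) \ge 2^{\diam(G)}$, fix $u, v$ at diameter distance $d$ and stack $2^d - 1$ pebbles at $u$; assigning each vertex $x$ the weight $w(x) = 2^{-\dist(x,v)}$, one checks that each pebbling step leaves the total weight non-increasing (the move $a \mapsto b$ removes $2 w(a)$ and adds $w(b) \le 2 w(a)$), while the initial weight $(2^d - 1)/2^d < 1$ falls short of the weight $w(v) = 1$ required at $v$.

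For the upper bound, my plan is to prove the stronger per-target estimate $\pi(G, v) \le (n - k)(2^k - 1) + 1$, where $k = \ecc_G(v)$. To see that this suffices, observe via a first-difference calculation that $(n - k)(2^k - 1)$ is non-decreasing in $k$ on $1 \le k \le n - 1$: the increment $(n - k - 1)(2^{k+1} - 1) - (n - k)(2^k - 1)$ simplifies to $(n - k - 2) 2^k + 1 \ge 1$ whenever $k \le n - 2$. Since $k \le d$, this yields $\pi(G, v) \le (n - d)(2^d - 1) + 1$, which is the stated bound.

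To prove the per-target estimate, fix $v$, choose a shortest path $P: v = u_0, u_1, \ldots, u_k$ to a vertex realizing $\ecc_G(v)$, and let $C$ be any $v$-unsolvable configuration. Clearly $C(v) = 0$. The argument hinges on two bounds on the remaining pebbles: (i) for each off-path vertex $u \ne v$, $C(u) \le 2^k - 1$, because otherwise the $2^k$ pebbles at $u$ alone would deliver one pebble to $v$ by repeatedly pushing along a shortest $u$-$v$ path of length at most $k$; and (ii) $\sum_{i=1}^{k} C(u_i) \le 2^k - 1$, because otherwise the restriction of $C$ to $V(P)$, viewed as a configuration on the subgraph $P$ (a path on $k + 1$ vertices with target at an endpoint), would be $v$-solvable inside $P$ since $\pi(P, v) = 2^k$, and those same moves solve $C$ within $G$. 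Summing yields $|C| \le (n - k - 1)(2^k - 1) + (2^k - 1) = (n - k)(2^k - 1)$.

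The main subtleties are justifying the two auxiliary facts invoked for the upper bound: the single-source stacked-pebbling bound on a shortest $u$-$v$ path, and the path pebbling number $\pi(P) = 2^k$ for a path on $k + 1$ vertices with target at an endpoint. Both follow from the same weight-monotonicity idea (combined with greedy pushing for the matching upper bound on paths), so the entire proof reduces to one fundamental weight-preservation argument together with a routine monotonicity check.
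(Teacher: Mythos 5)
The paper does not prove this statement; it is quoted as a known fact with citations to Chan--Godbole and Chung, so there is no in-paper argument to compare against. Your proof is correct and is essentially the standard one from those sources: the two lower bounds via the all-ones configuration and the weight function $w(x)=2^{-\dist(x,v)}$, and the upper bound via the per-target estimate $\pi(G,v)\le (n-\ecc(v))(2^{\ecc(v)}-1)+1$, obtained by bounding each off-path vertex by $2^{\ecc(v)}-1$ and the shortest path by $\pi(P_{\ecc(v)+1})-1=2^{\ecc(v)}-1$, together with the monotonicity of $(n-k)(2^k-1)$ in $k$. One small caveat: the matching fact $\pi(P_{k+1})=2^k$ (target at an endpoint) is not a consequence of weight monotonicity alone, which only yields unsolvability; its upper half needs the short greedy/induction argument you allude to, so make that explicit if you write this out in full.
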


The lower bound $\pi(G) \geq n$ is sharp for cubes \cite{Chung}, 2-connected graphs with a dominating vertex, 3-connected graphs of diameter two \cite{ClaHocHur}, the Petersen graph, and many others, including, probabilistically, almost all graphs \cite{CzHuKiTr} (these are known as {\it Class 0} graphs \cite{ClaHocHur}).
The lower bound $\pi(G) \geq 2^{\diam(G)}$ is sharp for Cartesian products of paths \cite{Chung}, for example.
The upper bound in Fact \ref{f:BasicBds} is sharp for paths and complete graphs.

It has been shown that deciding if $C$ is $r$-solvable is {\sf NP}-complete for general graphs (\cite{HurlKier,MilaClar}), as well as for some specific graph classes (e.g., planar or diameter two \cite{CuLeSiTa} graphs)), while for others is in {\sf P} (e.g., planar and diameter two graphs \cite{LewCusDio}, trees \cite{BCCMW}). 
Deciding if $\p(G)\le K$ takes even longer, in general (in $\p_2^{\sf P}$ --- see \cite{MilaClar}), although formulas for $\p(G)$ that can be calculated in polynomial time are known for many classes of graphs (e.g., Cartesian products of paths \cite{Chung}, trees \cite{BCCMW,Chung}, diameter two graphs \cite{ClaHocHur,PacSneVox}, sufficiently dense graphs \cite{CzygHurl}, split graphs \cite{AlcGutHurSemi}, powers of paths \cite{AlcoHurlPowers}, and others --- see \cite{HurlKent}).
In Theorem \ref{t:Algo} we provide a polynomial-time algorithm for computing $\p(T,D)$.
Some of the motivation for this work lies in the following conjecture.
The {\it pyramid} is the 6-cycle with a triangle added to one of its maximum independent sets.
\rev{We say that a graph is {\it pyramid-free} if it no induced subgraph is isomorphic to a pyramid.}

\begin{cnj}[\cite{AlcoHurlPowers}]
\label{c:PolyChordal}
If $G$ is a pyramid-free chordal graph then $\p(G)$ can be calculated in polynomial time.
\end{cnj}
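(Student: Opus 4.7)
The plan is to extend the tree algorithm of Theorem \ref{t:Algo} to pyramid-free chordal graphs by working on a clique tree of $G$. Every chordal graph $G$ admits a clique tree $T_G$ whose vertices are the maximal cliques of $G$, with the property that, for every $v\in V$, the set of maximal cliques containing $v$ induces a subtree of $T_G$. The hope is to perform a dynamic program over $T_G$ analogous to the path-partition and weight-function approach used here for trees, but where each node carries a clique rather than a single vertex, and each edge records a minimal clique separator.

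Before running any DP, I would establish a structural lemma that pyramid-freeness forces $D$-maximal configurations to be decomposable across minimal clique separators. Concretely, for a minimal clique separator $S$ of $G$ with $G\setminus S$ splitting into parts $A$ and $B$, I would try to show that an extremal unsolvable configuration $C$ satisfies that $C$ restricted to $A\cup S$ and $C$ restricted to $B\cup S$ are each determined, up to a small boundary statistic on $S$, by local optimizations on $G[A\cup S]$ and $G[B\cup S]$ respectively. The role of pyramid-freeness is to rule out the natural obstruction to such a decomposition: two internally disjoint length-two routes between cliques, which are exactly what a pyramid exhibits and which can mix pebbles across $S$ in ways not captured by any small summary.

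With such a decomposition in hand, I would design a DP whose states, for each rooted subtree of $T_G$ and each separator clique $S$ toward the parent, record the maximum size of a $D$-unsolvable subconfiguration meeting a prescribed boundary condition at $S$. The boundary condition should be a short vector indexed by vertices of $S$, summarizing how many pebbles each vertex can contribute to, or demand from, the remainder of the graph, with entries polynomially bounded via an analogue of the weight-function arguments of \cite{Chung} as adapted here for trees. Combining the boundary states from children at a node $K$ would be handled by the tree-pebbling subroutine of Theorem \ref{t:Algo} applied to the star structure on the cliques meeting $K$.

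The main obstacle is controlling this boundary state. In a tree each separator is a single edge, so the boundary reduces to a single integer, namely how many pebbles cross that edge, which is exactly what makes the tree DP polynomial. In a chordal graph, separators are cliques of arbitrary size, and pebbles can be routed through a clique in combinatorially rich ways that depend simultaneously on both sides. Identifying a polynomially sized, complete summary at each clique separator, and proving its correctness solely from pyramid-freeness, is where the real work lies and where this proposal is most speculative; indeed, I suspect the proof will require a non-trivial new structural theorem about minimal $(C,D)$-solutions in pyramid-free chordal graphs before any algorithm can be extracted.
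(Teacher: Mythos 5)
The statement you are trying to prove is Conjecture \ref{c:PolyChordal}, and you should be aware that the paper does not prove it: it is an open conjecture quoted from \cite{AlcoHurlPowers}, the paper resolves only the special case of trees (Theorems \ref{t:Trees} and \ref{t:Algo}), and it even proposes the broader Conjecture \ref{c:PolyChordalD} for general targets. So there is no proof in the paper to compare against, and your proposal does not supply one either; by your own admission it is a program with the central step missing.

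The gap is concrete and twofold. First, the decomposition lemma you would need --- that a $D$-maximal unsolvable configuration splits across a minimal clique separator $S$ up to a polynomially sized ``boundary statistic'' --- is essentially the entire content of the conjecture, not a reduction of it. None of the tools available here (the No-Cycle Lemma \ref{l:NCL}, Theorem \ref{t:Simplicial}, the No-Merging/superstack structure of Section \ref{ss:Trees}) yields such a separator decomposition: in a tree every separator is a single cut vertex, which is why a one-integer boundary suffices, whereas pebbling traffic through a clique separator can draw on both sides simultaneously, the number of pebbles crossing $S$ grows exponentially with distance, and you give no argument that a complete summary of this interaction has polynomial description, let alone that combining child states at a clique node reduces to the tree subroutine of Theorem \ref{t:Algo}. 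Second, your use of pyramid-freeness is heuristic: the role the pyramid plays in the literature is as an obstruction to simplicial-extremal and greedy behavior (compare Theorem \ref{t:Simplicial} and Conjecture \ref{c:Simplicial}), and ``two internally disjoint length-two routes between cliques'' is not an established reformulation of what an induced pyramid forbids, nor do you show that excluding induced pyramids controls routing through a separator. Until the structural theorem you yourself flag as missing is proved, the conjecture remains open, and the honest reading of this paper is that the tree case (computable in $O(s(D)\cdot n)$ time) is evidence for the conjecture rather than a step in a known proof of it.
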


Suppose $H$ is a spanning subgraph (e.g. a spanning tree) of $G$.
Because any $(C,D)$-solution in $H$ is a $(C,D)$-solution in $G$, we have $\p(G,D)\le \p(H,D)$.
Thus the calculation of pebbling numbers of trees is important.
Chung \cite{Chung} used a maximum $r$-path partition of $T$ to calculate the case in which $D=\{r\}$.
In this paper we solve the general case (see Corollary \ref{c:TargetPebbTree}, below).
In both cases the calculation involves a polynomial algorithm that decomposes the structure of an $n$-vertex tree in relation to the target.
We state in Theorem \ref{t:Algo} that the algorithm runs in $O(\ed{s(D)\cdot n})$ time.

For the case $D=\{r\}$, Chung used a maximum path partition $\cP=\{P_1,\ldots,P_\ell\}$ of a given rooted tree $(T,r)$.
We note that Bunde, et al. \cite{BCCMW} proved that a path partition of any tree can be constructed in linear time.
Let  $l_i$ denote the length of the path $P_i$, $w_i$ be the leaf vertex of $P_i$ that is not in $\cup\_{j<i}P_j$, and define the configuration $\hC_{r,t}$ on $T$ by $\hC_{r,t}(w_1) = t2^{l_1}-1$, $\hC_{r,t}(w_i) = 2^{l_i}-1$ for each $2\le i\le \ell$, and $\hC_{r,t}(x) = 0$ otherwise. 
We will refer to $\hC_{r,t}$ as a {\it Chung} configuration.
\rev{We remark that a Chung configuration is not $t$-fold $r$-solvable --- indeed, no pebble on $w_i$ can reach the endpoint of $P_i$ opposite from $w_i$; i.e., no pebble from $P_i$ can reach $P_{i-1}$.}
We note that $\ed{\hC_{r,t}(w_1)}=\max\hC_{\ed{r,t}}$ and observe that the unique $t$-fold $r$-solution of $\hC_{r,t}+w_1$ uses only the pebbles from $w_1$.
This property will be a key mechanism in our proofs below.
The simplest example of the following theorem is that $\p(P_n)=2^{n-1}$.

\begin{thm} [Chung \cite{Chung}]
\label{t:Chung}
If $(T,r)$ is a rooted tree then $\p_t(T,r)=|\hC_{r,t}|+1$.
\end{thm}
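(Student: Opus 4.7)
The strategy is induction on $\ell$, the number of paths in the maximum partition $\cP=\{P_1,\ldots,P_\ell\}$ of $(T,r)$. The base case $\ell=1$ reduces to the standard path formula $\p(P_{l_1+1},r^t)=t\cdot 2^{l_1}$: the stacked configuration $\{w_1^{t\cdot 2^{l_1}-1}\}$ yields only $t-1$ pebbles at $r$ after $l_1$ halvings, while any configuration of size $\ge t\cdot 2^{l_1}$ can be pushed greedily toward $r$ by a direct dyadic-weight argument.

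For the inductive step, the natural reduction is to peel off the final path. Set $v_\ell \in T_{\ell-1}$ to be the endpoint of $P_\ell$ lying in $T_{\ell-1}$, and let $T'=T-(V(P_\ell)\setminus\{v_\ell\})$. Then $(T',r)$ is a rooted tree whose maximum path partition is $\{P_1,\ldots,P_{\ell-1}\}$, and its Chung configuration $\hat{C}'_{r,t}$ satisfies $|\hat{C}_{r,t}|=|\hat{C}'_{r,t}|+(2^{l_\ell}-1)$.

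For the lower bound $\p(T,r^t)\ge|\hat{C}_{r,t}|+1$, I would show that $\hat{C}_{r,t}$ is $r^t$-unsolvable. Since trees admit greedy solutions (an exchange lemma I would cite or establish), we may assume any hypothetical $r^t$-solution $\s$ is greedy. Under greediness, pebbles on $V(P_\ell)\setminus\{v_\ell\}$ move only toward $v_\ell$, and the sole initial pebbles on $P_\ell$ are the $2^{l_\ell}-1<2^{l_\ell}$ pebbles at $w_\ell$, which are insufficient to deliver even one pebble to $v_\ell$. Consequently no pebble ever crosses from $P_\ell$ into $T'$, so the restriction of $\s$ to $T'$ is an $r^t$-solution of $\hat{C}_{r,t}\!\upharpoonright_{T'}=\hat{C}'_{r,t}$, contradicting the inductive hypothesis.

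For the upper bound, take any $C$ with $|C|\ge|\hat{C}_{r,t}|+1=|\hat{C}'_{r,t}|+2^{l_\ell}$ and set $p_\ell = |C\!\upharpoonright_{V(P_\ell)\setminus\{v_\ell\}}|$. If $p_\ell\le 2^{l_\ell}-1$, discard these pebbles; the remaining configuration on $T'$ has size at least $|\hat{C}'_{r,t}|+1$, so by induction it is $r^t$-solvable. The harder case is $p_\ell\ge 2^{l_\ell}$: here the naive transport of $\lfloor p_\ell/2^{l_\ell}\rfloor$ pebbles from $w_\ell$ to $v_\ell$ can leave $T'$ with fewer pebbles than $|\hat{C}'_{r,t}|+1$ (for instance, when $C$ is stacked at $w_\ell$ with $p_\ell=2\cdot 2^{l_\ell}$). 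This is the main obstacle, and resolving it appears to require either sharpening the inductive hypothesis to track structural information beyond a size threshold (recognizing that configurations failing the size bound may still be solvable when the pebbles are distributed advantageously), or iterating a finer reduction that simultaneously accounts for all branches $P_2,\ldots,P_\ell$ rather than peeling them off one at a time. The cleanest path is likely to carry through a stronger claim, such as characterizing solvability of arbitrary configurations on $(T',r)$ via a weighted sum over path contributions, from which the Chung threshold emerges as the worst case.
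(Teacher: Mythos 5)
Your lower bound is essentially on the right track, but two points need attention, and the paper itself offers no guidance here since Theorem \ref{t:Chung} is only cited from \cite{Chung}, not proved. The greedy/pendant-path argument (minimal solutions on trees are greedy by the No-Cycle Lemma, so nothing enters $V(P_\ell)\setminus\{v_\ell\}$ and the $2^{l_\ell}-1$ pebbles at $w_\ell$ cannot leave) is correct, but you finish by ``contradicting the inductive hypothesis,'' whereas the hypothesis you carry is only the numerical statement $\p_t(T',r)=|\hC'_{r,t}|+1$; that guarantees \emph{some} unsolvable configuration of size $|\hC'_{r,t}|$ exists, not that $\hC'_{r,t}$ itself is unsolvable. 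You must strengthen the induction to carry the unsolvability of the Chung configuration (and also justify, in passing, that $\{P_1,\dots,P_{\ell-1}\}$ is a maximum path partition of $T'$). Both repairs are routine.

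The genuine gap is the upper bound in the case $p_\ell\ge 2^{l_\ell}$, which you explicitly leave open; this is the substantive half of the theorem, so as it stands the statement is not proved. Your own analysis shows why no patch within your framework will work: a size-only inductive hypothesis for $T'$ cannot absorb the factor-$2^{l_\ell}$ loss incurred by transporting pebbles from the pendant path to $v_\ell$, because $k$ pebbles sitting at the specific vertex $v_\ell$ can be worth far more than $k$ pebbles counted against a worst-case threshold for $T'$ (your stacked example, or a broom with a huge stack on a short pendant leaf, shows the transported configuration drops below $|\hC'_{r,t}|+1$ yet is solvable). Closing this case requires actually formulating and proving a strengthened statement, not merely noting that one is needed. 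One workable route: reduce $t$-fold to $1$-fold by induction on $t$, using that $l_1=\ecc_T(r)$ and that a minimal solution delivering a single pebble to $r$ on a tree is greedy and preserves the weight $\sum_v C(v)2^{-\dist(v,r)}$ exactly, hence consumes at most $2^{l_1}$ pebbles; then handle $t=1$ with an induction whose hypothesis records, for each pendant branch, how many pebbles it can deliver to its attachment vertex (a positional statement), rather than only total counts. Until something of this kind is carried out, the inequality $\p_t(T,r)\le|\hC_{r,t}|+1$ remains unestablished, so the proposal is an outline with a correct lower-bound idea rather than a proof.
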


Thus, every rooted tree $(T,r)$ has a $t$-fold $r$-extremal configuration $C$ with $\supp(C)\sse L(T)$.
In fact, more can be said.
The simple, yet powerful lemma below was first observed by Moews.

\begin{lem} [No-Cycle Lemma \cite{Moews}]
\label{l:NCL}
If $\s$ is a minimal $(C,r)$-solution in $G$ then $G_\s$ is acyclic.
\end{lem}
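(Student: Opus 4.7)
The plan is to prove the contrapositive by an exchange argument: if $\sigma$ is a $(C,r)$-solution and $G_\sigma$ contains a directed cycle, then $\sigma$ is not minimal, i.e.\ some single step of $\sigma$ can be deleted while preserving the solution.

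Suppose $G_\sigma$ contains a directed cycle $v_1 \to v_2 \to \cdots \to v_k \to v_1$; for each arc $v_i \to v_{i+1}$ select a step $s_i \in \sigma$ realizing it. The crucial accounting observation is that the net effect on the final configuration of deleting all $k$ of the $s_i$ simultaneously is $+1$ pebble at each cycle vertex $v_i$ (saving two pebbles at the outgoing cycle step and forfeiting one from the incoming cycle step) and $0$ elsewhere. Hence, purely as an end-state budget, removing the cycle steps never decreases the supply at $r$, so the demand $r$ would still be met.

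To convert this global budget gain into the deletion of a single step, I would inspect the time order in which $s_1, \ldots, s_k$ occur within $\sigma$. Since a linear order on $k$ distinct times cannot agree with the cyclic order $(1,2,\ldots,k)$, some cyclically consecutive pair $(s_i, s_{i+1})$ must be \emph{time-reversed}: $s_{i+1}$ executes earlier in $\sigma$ than $s_i$. For this $i$ I would claim that $\sigma - s_i$ is a valid $(C,r)$-solution. Indeed, removing $s_i$ adds $2$ pebbles to $v_i$ and subtracts $1$ from $v_{i+1}$ at all times from the position of $s_i$ onward; the surplus at $v_i$ never harms validity, and the deficit at $v_{i+1}$ cannot invalidate the adjacent cycle step $s_{i+1}$ because $s_{i+1}$ has already been performed before $s_i$.

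The main obstacle will be handling subsequent non-cycle steps occurring after $s_i$ in $\sigma$ whose source is $v_{i+1}$: such a step could in principle be invalidated by the missing pebble at $v_{i+1}$. To overcome this, I expect to need an iterated or recursive argument. If the naive removal at the reversed pair fails because of such a downstream step, then the resulting obstruction locates either a strictly shorter cycle in $G_\sigma$, another reversed pair, or an alternative representative $s_i'$ of the same arc occurring later in time; replacing $s_i$ accordingly lets us recurse on a strictly smaller instance (in either cycle length or in the position of the latest cycle step), and the recursion must terminate. Whichever reduction one settles on, the terminal case produces a single step whose deletion leaves a valid $(C,r)$-solution, contradicting minimality of $\sigma$.
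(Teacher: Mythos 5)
The paper itself offers no proof of this lemma (it is quoted from Moews), so I am judging your argument on its own terms. Your accounting observation (deleting one step along each arc of the cycle raises the net count by $1$ on every cycle vertex and changes nothing elsewhere) is the easy half of the standard proof; the reduction to deleting a \emph{single} step in its original position is where your argument breaks, and it breaks irreparably, because the contrapositive you are aiming at (``if $G_\sigma$ has a directed cycle then some one step of $\sigma$ can be removed, keeping the remaining steps in their original order, and still have a solution'') is false. Take a single edge $uv$ with $C(u)=2$, $C(v)=1$, target $r=u$, and $\sigma=(u\mapsto v,\; v\mapsto u)$. Then $G_\sigma$ is a directed $2$-cycle, yet deleting the first step leaves a sequence that cannot be executed ($v$ holds only one pebble), and deleting the second leaves no pebble on $r$; the correct reduction here must delete \emph{both} steps. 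Note that your time-reversed-pair claim already fails in this example for a reason you did not list among the obstacles: the lost pebble at $v_{i+1}$ can violate the demand at $v_{i+1}$ itself (here $v_{i+1}=u=r$), not merely invalidate later steps sourced at $v_{i+1}$.

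The missing idea is the one Moews-style proofs supply, and it requires reading ``minimal'' as: no proper sub-multiset of the steps of $\sigma$ can be arranged (in some order) into a $(C,r)$-solution --- the only reading under which the lemma is true, as the example shows. With that reading, delete one step along each arc of a directed cycle, and repeat until the move digraph is acyclic; by your own accounting the net count at every vertex never decreases, so the formal final configuration still meets the demand. What remains --- and is the substantive half of the proof --- is an executability (reordering) lemma: a multiset of steps whose digraph is acyclic and whose net counts are nonnegative can be played in some order. This follows by induction on the number of steps: an acyclic move digraph with at least one arc has a vertex $u$ with outgoing steps but no incoming ones, and since $u$ receives nothing its nonnegative net count forces $C(u)\ge 2\cdot(\mbox{number of steps out of }u)\ge 2$, so one step out of $u$ may be played first and induction finishes. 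Your closing recursion (``a shorter cycle, another reversed pair, or a later representative'') is not an argument and cannot be repaired inside the single-deletion framework; replace it with the cycle-deletion-plus-reordering argument above.
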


Note that this implies that minimal solutions in trees are greedy.
Furthermore, this implies that if $C$ is $t$-fold $r$-unsolvable on a tree $T$, and $C(v)>0$ for some interior vertex $v$, then so is the configuration $C'$ defined by $C'(v)=C(v)-1$, $C'(u)=C(u)+2$ for some neighbor $u$ of $v$ that is farther from $r$, and $C'(w)=C(w)$ for all other $w$.
In particular, since $|C'|>|C|$, this proves the following fact.

\begin{fct}
\label{f:Leaves}
For every rooted tree $(T,r)$, every $t$-fold $r$-extremal configuration $C$ on $T$ has $\supp(C)\sse L(T)$.
\end{fct}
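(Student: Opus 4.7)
The plan is to argue by contradiction, making explicit the reduction sketched in the paragraph preceding the statement. Suppose $C$ is a $t$-fold $r$-extremal configuration on $(T,r)$ but some interior vertex $v$ has $C(v)>0$. Because $v$ is interior, it has a neighbor $u$ with $\dist(u,r)>\dist(v,r)$: either $v=r$ and any neighbor qualifies, or $v\ne r$ has at least one child in the rooted tree. Define $C'$ by $C'(v)=C(v)-1$, $C'(u)=C(u)+2$, and $C'(w)=C(w)$ elsewhere, so $|C'|=|C|+1$. Since extremal means $r$-unsolvable of maximum size, it suffices to show that $C'$ is still $t$-fold $r$-unsolvable.

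I would prove the contrapositive: any $t$-fold $r$-solution $\s'$ of $C'$ can be converted into one of $C$. Take $\s'$ minimal; by the remark after the No-Cycle Lemma (Lemma \ref{l:NCL}), $\s'$ is $r$-greedy. In the tree $T$, $v$ is the unique neighbor of $u$ closer to $r$, so every step of $\s'$ whose source is $u$ must be $u\mapsto v$.

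From here I would split into two cases. In Case~1, $\s'$ contains at least one step $u\mapsto v$; I delete the first such step and run the remaining sequence from $C$. Greediness ensures that no step of $\s'$ before the deleted one had source $u$, so the two-pebble deficit at $u$ in $C$ (relative to $C'$) never blocks applicability, while the one extra pebble at $v$ can only help; after the deletion the two executions re-synchronize exactly, so $r$ still receives at least $t$ pebbles. In Case~2, $\s'$ contains no step with source $u$ at all; then $\s'$ itself runs verbatim from $C$, because it never draws from $u$, and the extra pebble at $v$ is again harmless. Either way $C$ is $t$-fold $r$-solvable, contradicting the hypothesis. This contradiction forces $v\in L(T)$, as claimed.

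The only delicate piece is the bookkeeping in Case~1: verifying applicability of every step before the deletion when the starting configuration differs from $C'$ at both $u$ and $v$. This is where $r$-greediness, delivered by the No-Cycle Lemma in trees, does the essential work — it forbids any earlier use of pebbles at $u$, so the shortfall at $u$ manifests precisely at the step we have chosen to delete.
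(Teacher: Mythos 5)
Your proof is correct and follows essentially the same route as the paper, which derives Fact \ref{f:Leaves} from the remark after the No-Cycle Lemma: shifting one pebble from an interior vertex $v$ to two pebbles on a neighbor $u$ farther from $r$ preserves $t$-fold $r$-unsolvability (via greediness of minimal solutions in trees) while increasing the size, contradicting extremality. Your case analysis simply makes explicit the solution-conversion bookkeeping that the paper leaves implicit.
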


Later, Alc\'on and Hurlbert generalized this result to chordal graphs.

\begin{thm} [Alc\'on, Hurlbert \cite{AlcoHurlPowers}]
\label{t:Simplicial}
If $G$ is chordal then, for all $r\in V$ and $t\ge 1$, there is a $t$-fold $r$-extremal configuration $C$ with $\supp(C)\sse S(G)$. 
\end{thm}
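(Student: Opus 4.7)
The proof should parallel the argument for Fact~\ref{f:Leaves}, using the No-Cycle Lemma together with the simplicial structure of chordal graphs. Specifically, the plan is to show that any $t$-fold $r$-extremal configuration $C$ with a pebble on a non-simplicial vertex $v$ can be transformed into a strictly larger $r$-unsolvable configuration via the pebble-reversal move $C\mapsto C-v+2u$ for a suitable $u\in N(v)$, contradicting extremality. The case $r\notin S(G)$ with $C(r)>0$ can be handled by first arguing that one may choose an extremal $C$ with $C(r)=0$, since pebbles already at $r$ are demand-satisfying and the remaining $|C|-C(r)$ pebbles can be redistributed to leaves of a spanning tree without creating a solution.

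The key lemma to prove is the chordal analogue of the tree-pushing step: if $C$ is $r$-unsolvable on a chordal graph $G$ and $v\neq r$ is non-simplicial with $C(v)>0$, then there exists $u\in N(v)$ such that $C-v+2u$ is also $r$-unsolvable. I would prove this by contradiction: assume $C-v+2u$ is $r$-solvable for every $u\in N(v)$, take minimal solutions $\sigma_u$, and apply Lemma~\ref{l:NCL} to obtain acyclic solution subgraphs $G_{\sigma_u}$. A counting step shows each $\sigma_u$ must consume both of the added pebbles at $u$ (otherwise one recovers a solution of $C$ itself). Then, using the non-simpliciality of $v$, fix non-adjacent $x,y\in N(v)$ and examine the opening pebbling steps $x^2\mapsto w_x$ and $y^2\mapsto w_y$ of $\sigma_x$ and $\sigma_y$. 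The goal is to use the chordal structure at $v$ either to reroute one of these opening steps through $v$ (absorbing the existing pebble $C(v)\ge 1$), or to splice fragments of $\sigma_x$ and $\sigma_y$ together along a triangle containing $v$, yielding a solution of $C$ and the desired contradiction.

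The main obstacle is carrying out this combination step rigorously. In the tree case the neighbor $u$ farther from $r$ is uniquely determined, and the NCL forces every $r$-greedy move out of $u$ to pass through $v$, so a single swap using $C(v)\ge 1$ suffices. In a chordal graph, pebbles may flow from $u$ to $r$ along multiple alternative routes, and no individual $\sigma_u$ need contain an obviously swappable pebble. The non-adjacency of $x$ and $y$ is the crucial leverage: because no maximal clique contains both, the combination argument must exploit a chord supplied by a perfect elimination ordering rooted at $r$ to locate a shared edge on which the swap can be performed. I anticipate that formalizing this — via case analysis on the positions of $w_x$ and $w_y$ relative to $N(v)$, and likely an induction on $|C|+\dist(v,r)$ — will constitute the bulk of the proof, while the extremality-contradiction scaffolding around it is routine given the tree precedent.
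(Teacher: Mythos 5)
You should first note that the paper does not prove Theorem \ref{t:Simplicial} at all: it is imported verbatim from \cite{AlcoHurlPowers}, so there is no internal proof to match your attempt against, and your proposal must stand on its own. It does not, because the key lemma you announce is false in the generality you state it. Take the chordal graph $G$ on vertices $r,a,b,c,v$ with edges $ra$, $rb$, $ab$, $av$, $bv$, $ac$, $cv$ (the ordering $c,v,b,a,r$ is a perfect elimination ordering), target $r$ with $t=1$, and $C=\{a,v\}$. The vertex $v$ is non-simplicial (its neighbors $b$ and $c$ are non-adjacent) and $C$ is $r$-unsolvable, yet $C-v+2u$ is $r$-solvable for every $u\in N(v)=\{a,b,c\}$: for $u=a$ or $u=b$ the two added pebbles move straight to $r$, and for $u=c$ they move to $a$, which then carries two pebbles and reaches $r$. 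The tree version (Fact \ref{f:Leaves}) survives precisely because every pebble on the far side of $v$ must pass through $v$, so two added pebbles at the farther neighbor are worth at most the one pebble removed from $v$; in a chordal graph the added pebbles can bypass $v$ and combine with pebbles already present elsewhere (here the lone pebble on $a$). Consequently your plan of deriving a contradiction from minimal solutions $\sigma_u$ for all $u\in N(v)$ cannot succeed as described: the statement it is meant to establish is simply untrue for general unsolvable $C$, so any repair must invoke maximality or extremality of $C$ in an essential way, and your outline gives no indication of how that would enter.

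Even setting that aside, the step you identify as the heart of the argument --- rerouting an opening move through $v$ or splicing $\sigma_x$ and $\sigma_y$ along a chord supplied by a perfect elimination ordering --- is stated only as a goal, and you acknowledge that formalizing it would be the bulk of the proof; nothing in the proposal shows it can be done. The reduction to $C(r)=0$ is similarly unsupported: redistributing the remaining pebbles ``to leaves of a spanning tree without creating a solution'' is exactly the kind of claim the theorem is about (and the relevant support set is $S(G)$, not the leaves of a spanning tree, which need not be simplicial in $G$). As it stands, the proposal is a plan built on a false lemma together with an unexecuted main step, not a proof of the theorem.
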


This led to the following conjectured generalization.

\begin{cnj}
\label{c:Simplicial}
If $G$ is chordal and $C$ is $D$-extremal then there is a $D$-unsolvable configuration $C^*$ with $|C^*|\ge |C|$ and $\supp(C^*)\sse S(G)$.
\end{cnj}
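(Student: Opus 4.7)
The plan is to generalize the exchange argument underlying Fact~\ref{f:Leaves} and Theorem~\ref{t:Simplicial} to an arbitrary target distribution $D$. I aim to prove the slightly stronger statement that any $D$-unsolvable $C$ can be transformed into a $D$-unsolvable $C^*$ with $|C^*| \ge |C|$ and $\supp(C^*) \sse S(G)$; applied to a $D$-extremal $C$, this yields the conjecture.

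I would proceed iteratively. Define the potential $\Phi(C) := \sum_{v \notin S(G)} C(v) \cdot 3^{\dist(v, S(G))}$, which vanishes exactly when $\supp(C) \sse S(G)$. The inductive step selects a non-simplicial vertex $v$ with $C(v) > 0$ together with a neighbor $u$ of $v$ strictly closer to $S(G)$ (such a $u$ exists since $G$ is connected and $S(G) \neq \emptyset$), then performs the Fact~\ref{f:Leaves}-style exchange $C \mapsto C'$ defined by $C'(v) = C(v) - 1$ and $C'(u) = C(u) + 2$. An elementary calculation shows this exchange strictly decreases $\Phi$ and strictly increases $|C|$ by one, so iterating terminates at a configuration $C^*$ satisfying $\supp(C^*) \sse S(G)$ and $|C^*| > |C|$.

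The main obstacle is proving that this exchange preserves $D$-unsolvability. In the single-target case with $D = \{r\}$, the No-Cycle Lemma (Lemma~\ref{l:NCL}) makes the exchange safe whenever $u$ lies farther from $r$ than $v$, because greedy solutions never push pebbles away from the target. With several targets in $\dD$, however, the pushed pebble at $u$ may newly lie on a short path to some $d \in \dD$, so no uniform ``away from $D$'' direction is available. To handle this, I would argue contrapositively: assume $C'$ is $D$-solvable via a minimal solution $\sigma'$; since $C$ is obtained from $C'$ by the single pebbling step $u \mapsto v$, one can try to pull $\sigma'$ back to a solution of $C$ by trading the two extra pebbles at $u$ consumed by $\sigma'$ against the one extra pebble present at $v$ in $C$ but not in $C'$. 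Chordality enters here through the clique structure around $v$: any detour that $\sigma'$ takes through $u$ should have an equally efficient replacement through $v$, exploiting the fact that in a chordal graph the local neighborhood structure is clique-like along a simplicial elimination ordering.

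Making the pullback rigorous for arbitrary multi-target $D$ and arbitrary chordal $G$ is the technical crux, and will likely require a careful case analysis based on whether $u$ is simplicial in $G$, on how $\sigma'$ interacts with the edge $uv$, and on the locations of targets in $\dD$ relative to $v$ and $u$. A fallback plan is first to verify the conjecture on narrower chordal subclasses---$k$-trees, interval graphs, or the pyramid-free chordal graphs of Conjecture~\ref{c:PolyChordal}---where the clique-tree geometry is simpler, and then to bootstrap via the general clique-tree decomposition.
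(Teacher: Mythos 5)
There is a genuine gap, and it sits exactly where you placed it: the claim that the exchange $C\mapsto C'$ with $C'(v)=C(v)-1$, $C'(u)=C(u)+2$ preserves $D$-unsolvability is not established, and as stated it is false in general. Note that $C'$ can simulate $C$ by the single step $u\mapsto v$, so $C'$ is automatically at least as solvable as $C$; unsolvability is preserved only if the two pebbles placed on $u$ are provably no more useful than the one pebble they replaced on $v$. In the single-target case this is exactly what the No-Cycle Lemma \ref{l:NCL} delivers, but only because $u$ is chosen \emph{farther from the unique target} $r$, so a minimal solution never benefits from the displaced pebbles except by moving one back to $v$. Your chosen direction, ``$u$ strictly closer to $S(G)$,'' has no relation to the targets: with several targets (or with $u\in\supp(D)$, or $u$ simply nearer to some $d\in\dD$ than $v$ is), the two pebbles at $u$ can genuinely create a new solution, so $C'$ may be $D$-solvable while $C$ is not, and the iteration collapses. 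This is not a repairable bookkeeping issue; the absence of a uniform ``away from $D$'' direction is precisely why the paper does not prove the statement by a local exchange even for trees. There, the argument (Theorem \ref{t:Trees}) proceeds by induction on $|D|$ and on the number of leaves, peeling off a minimal sub-solution for a target chosen closest to a leaf and re-rooting the residual demand, with the path base case (Theorem \ref{t:Path}) handled by a pebbling-arrangement decomposition plus a convexity comparison of the two end-stacks --- machinery your sketch does not replace.

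You should also be aware that for general chordal $G$ the statement is an open conjecture in this paper (Conjecture \ref{c:Simplicial}); it is proved here only for trees, and previously for trees with positive $D$ in \cite{CCFHPST05}. So your appeal to ``chordality through the clique structure around $v$'' is not a gap in exposition but the entire unsolved content: even granting a simplicial elimination ordering, no pullback argument trading two pebbles at $u$ against one at $v$ is known that works for arbitrary multi-target $D$, and Theorem \ref{t:Simplicial} (the $t$-fold single-target case) already required a substantially different proof. Your fallback of verifying narrower subclasses ($k$-trees, interval graphs, split graphs) is reasonable and is in fact what the paper proposes as Problems \ref{p:NoMerge} and \ref{p:Stacking}, but as written the proposal does not constitute a proof of the statement.
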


This conjecture had been shown for trees, in Theorem 8 of \cite{CCFHPST05}, in the case that $D(v)>0$ for all $v\in V(G)$.
We prove it for trees and all $D$ in Theorem \ref{t:Trees}.

\subsection{Further Motivation}
\label{ss:motivation}

After Chung generalized the target $r$ to $r^t$, the next general target to be considered was $D=V$.
In $\cite{CCFHPST05}$, the pebbling number $\p(G,V)$ was called the {\it cover} pebbling number.
For example, they proved that $\p(P_n,V)=2^n-1$.
More generally, they defined the functions $\a(v,D)=\sum_{u\in V}D(u)2^{\dist(u,v)}$ for $v\in V$ and $\a(G,D)=\max_{v\in V}\a(v,D)$, and proved that, for positive $D$, $\p(T,D)=\a(T,D)$ for every tree $T$.
The authors also conjectured that the same formula holds for all graphs, which they proved for complete graphs --- the formula simplifies in this case to $\p(K_n,D)=2|D|-\min D$ for positive $D$.
The conjecture was eventually proven, independently, by Vuong and Wyckoff \cite{VW04} and Sjostrand \cite{S05}.

\begin{thm}[\cite{S05,VW04}]
\label{t:CoverPebb}
For positive $D$ on any graph $G$, $\p(G,V) = \a(G,V)$.
\end{thm}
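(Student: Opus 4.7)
The plan is to prove the two directions $\p(G,D) \ge \a(G,D)$ and $\p(G,D) \le \a(G,D)$ separately, with the lower bound being straightforward and the upper bound carrying the substance. For the lower bound, pick $v^\ast \in V$ achieving $\a(v^\ast,D) = \a(G,D)$ and stack $\a(v^\ast,D)-1$ pebbles at $v^\ast$. To satisfy the demand $D(u)$ at any vertex $u$ using only pebbles from $v^\ast$, one needs $D(u) \cdot 2^{\dist(v^\ast,u)}$ pebbles at $v^\ast$ (each pebbling step toward $u$ halves the pebble count), totaling exactly $\a(v^\ast,D) = \a(G,D)$ pebbles at $v^\ast$, which exceeds the supply by one. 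Hence this stacked configuration is $D$-unsolvable, giving $\p(G,D) \ge \a(G,D)$.

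For the upper bound, I would establish the following \textbf{Stacking Theorem}: every $D$-unsolvable configuration $C$ satisfies $|C| < \a(v,D)$ for some vertex $v$, which immediately yields $|C| < \a(G,D)$ and hence $\p(G,D) \le \a(G,D)$. A key tool is the weight function $w_v(u) = 2^{\dist(u,v)}$: each pebbling step $u \mapsto u'$ changes the total weight $W_v(C) = \sum_x C(x)\,w_v(x)$ by $w_v(u') - 2 w_v(u) \le 0$, with equality only when $u'$ is strictly farther from $v$ than $u$. Thus weight is non-increasing under pebbling, and $D$-solvability forces $W_v(C) \ge W_v(D) = \a(v,D)$ for every $v$. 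Moreover, for a configuration stacked at $v$ with size $k$, the universal weight inequality reduces via the triangle inequality to the single bound $k \ge \a(v,D)$, which is also sufficient, so a stacked configuration at $v$ is $D$-solvable if and only if $k \ge \a(v,D)$.

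The main obstacle is the Stacking Theorem itself: the weight inequality above is only necessary, not sufficient, for $D$-solvability of non-stacked configurations, so one cannot simply appeal to weights. The natural strategy is a consolidation argument. Given a $D$-unsolvable $C$ with $|\supp(C)| \ge 2$, construct a $D$-unsolvable $C'$ with $|C'| \ge |C|$ and $|\supp(C')| < |\supp(C)|$ by shifting pebbles between two vertices of the support; iterating this consolidation yields a stacked $D$-unsolvable $C^\ast$ with $|C^\ast| \ge |C|$, and then $|C| \le |C^\ast| < \a(v,D) \le \a(G,D)$ by the stacked case. The hard part is the single consolidation step, since a naive shift of one pebble either alters the total size or accidentally creates a $D$-solvable configuration. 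Sjostrand resolved this with a probabilistic argument averaging over random shift amounts, showing that at least one direction of the transfer preserves unsolvability while maintaining the total; Vuong and Wyckoff's independent proof proceeds via a more structural induction on the graph. I would follow Sjostrand's route, since the probabilistic averaging smoothly mediates the tension between size conservation and preservation of unsolvability.
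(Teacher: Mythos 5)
This theorem is not proved in the paper at all: it is quoted from \cite{S05,VW04}, and the paper's only commentary on it is the remark that ``the main technique in proving this fact is showing that any $D$-extremal configuration is stacked.'' Your outline is consistent with that remark --- your lower bound (a stack of $\a(v^*,D)-1$ pebbles at a vertex maximizing $\a(\cdot,D)$, certified by the weight $w_v(u)=2^{\dist(u,v)}$, which is non-increasing under pebbling steps) and your reduction of the upper bound to a Stacking Theorem are both correct and are indeed the standard skeleton of the known proofs.

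The genuine gap is that the Stacking Theorem, which is the entire substance of the result, is not proved in your proposal: the ``consolidation step'' (from a $D$-unsolvable $C$ with $|\supp(C)|\ge 2$, produce a $D$-unsolvable $C'$ with $|C'|\ge|C|$ and smaller support) is exactly the hard part, and you dispose of it by attributing to Sjostrand ``a probabilistic argument averaging over random shift amounts,'' a description that is neither developed nor verifiable from what you wrote; as it stands this is a citation of the theorem being proved, not a proof. A concrete symptom of the missing argument is that your sketch never uses the hypothesis that $D$ is positive, yet positivity is essential: as this paper itself points out, for non-positive $D$ extremal configurations need not be stacked (Chung configurations on trees are counterexamples), so any correct consolidation argument must invoke $D(u)>0$ at some specific point --- for instance, positivity is what guarantees that every vertex lying between two stacks is itself a demand vertex, which is what makes the two-sided exchange of pebbles between support vertices preserve unsolvability. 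Until that step is actually carried out (or honestly delegated to \cite{S05} or \cite{VW04} as the paper does), the upper bound $\p(G,D)\le\a(G,D)$ is unsupported.
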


The main technique in proving this fact is showing that any $D$-extremal configuration is stacked.
In general, for non-positive $D$ this is not always true, as we see from Chung configurations on trees.
However, for trees with non-positive targets, we are able to generalize the stacking notion to that of a superstack, which will have one ``dominant'' stack, in the manner of a Chung configuration.

A new tool in attacking the pebbling number of graphs is the Weak Target Conjecture \cite{HHHtPebb}.

\begin{cnj}[Weak Target Conjecture]
   Every graph $G$ satisfies $\pi(G, D) \leq  \pi_{|D|}(G)$ for every target $D$.
\label{c:WTC}
\end{cnj}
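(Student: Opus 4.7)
The plan is to reformulate the conjecture in its dual (extremal) form: it suffices to show that for every $D$-extremal configuration $C$ on $G$, there exists a vertex $v^{\ast}\in V(G)$ and a $(v^{\ast})^{|D|}$-unsolvable configuration $C^{\ast}$ with $|C^{\ast}|\ge |C|$. Then
\[
\pi(G,D)-1=|C|\le |C^{\ast}|\le \pi(G,(v^{\ast})^{|D|})-1\le \pi_{|D|}(G)-1,
\]
which is the desired inequality. This reformulation is attractive because $t$-fold stacked targets are much better understood than general targets (for example via Fact \ref{f:Leaves} and Theorem \ref{t:Simplicial} on chordal graphs), so the problem reduces to an exchange argument about $D$-unsolvable configurations rather than about solutions.

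My primary strategy is induction on $s(D)$. The base case $s(D)=1$ holds by the definition of $\pi_{|D|}(G)$. For the inductive step, I would not decrease $|D|$ (which would give the wrong bound) but instead try to decrease the support while preserving the total demand. Concretely, for any two vertices $u,v$ with $D(u)\ge 1$, set $D^\prime = D - u + v$, so that $|D^\prime|=|D|$ and $s(D^\prime)\le s(D)$ whenever the transfer empties the $u$-coordinate or places mass at an existing support vertex. By repeatedly transferring demand onto the vertex $v^{\ast}$ where $\pi(G,(v^{\ast})^{|D|})=\pi_{|D|}(G)$ is realized, we reach the stacked target $(v^{\ast})^{|D|}$ in finitely many steps. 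So the whole conjecture would follow from the \emph{demand-shift monotonicity} claim:
\[
\pi(G,D)\ \le\ \pi(G,D-u+v)
\]
whenever $D(u)\ge 1$ and $v$ is chosen to lie on a shortest path toward $v^{\ast}$. Equivalently, any $D$-unsolvable $C$ can be transformed into a $(D-u+v)$-unsolvable $C^\prime$ with $|C^\prime|\ge |C|$.

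To prove the exchange step, I would take a $D$-extremal $C$, fix an edge $uw$ on a shortest $u,v$-path, and construct $C^\prime$ by either (i) shifting one pebble across the edge $wu$ (replacing two pebbles at $w$ with one at $u$ is reversible, and the reverse operation can be used to increase $|C^\prime|$), or (ii) adding a single pebble at a vertex chosen so that the new configuration fails to solve $D-u+v$. The $D$-unsolvability of $C$ would be leveraged via a structural witness: any hypothetical $(C^\prime, D-u+v)$-solution $\sigma$ can be shown to induce a $(C,D)$-solution by reversing the modification along $wu$ using a single pebbling step $w\mapsto u$, contradicting unsolvability. The No-Cycle Lemma (Lemma \ref{l:NCL}) would guarantee that the induced solution $\sigma$ is acyclic and hence can be locally edited near $u,v$ without side effects.

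The main obstacle is precisely the demand-shift claim, which is the heart of the conjecture and is false if the shift is not chosen wisely: moving demand from $u$ to $v$ can decrease the pebbling number when the only hard configurations for $D$ stack pebbles far from $u$ but near $v$. Thus the choice of the shift target must be guided by the global extremal structure, not just by $D$. A secondary difficulty is that, for graphs with many shortest paths (as opposed to trees), the local edit in the exchange argument has no canonical form; the edit might need to operate on a \emph{weighted} shortest-path tree rooted at $v^{\ast}$. If the edge-by-edge exchange proves too brittle, my fallback would be a potential-function approach: define $\Phi_D(C)=\max_{v}\sum_{u\in V}C(u)\,2^{-\dist(u,v)}$-style quantities derived from the cover-pebbling weight of \cite{S05,VW04}, and show that $D$-extremal configurations maximize a related functional that is dominated, for the appropriate $v^{\ast}$, by the $(v^{\ast})^{|D|}$-extremal functional. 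Because the conjecture is still open for general $G$, I expect a full proof will require combining the exchange argument with a class-dependent structural input (such as a simplicial elimination order for chordal graphs), rather than a single universal technique.
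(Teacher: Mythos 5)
The statement you are trying to prove is Conjecture \ref{c:WTC}, which the paper does not prove: it is an open conjecture (due to Herscovici et al.\ \cite{HHHtPebb}), verified only for special families such as trees, cycles, complete graphs, cubes, 2-paths, and Kneser graphs $K(m,2)$. So there is no proof in the paper to compare against, and your own closing remarks correctly concede that what you have written is a strategy rather than a proof. The honest dual reformulation in your first paragraph is fine and standard; the problem is the engine you propose to drive it.

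The central claimed lemma --- \emph{demand-shift monotonicity}, $\pi(G,D)\le\pi(G,D-u+v)$ whenever $v$ lies on a shortest path from $u$ toward the vertex $v^{*}$ realizing $\pi_{|D|}(G)$ --- is false, even on trees and even at the very first shift. Take $G=K_{1,3}$ with center $c$ and leaves $a,b,d$, and let $D=\{a,b\}$. Every leaf realizes $\pi_{2}(K_{1,3})=9$ (Theorem \ref{t:Chung}), so take $v^{*}=a$; the prescribed shift moves the demand at $b$ one step toward $a$, giving $D'=\{a,c\}$. One checks (using Theorem \ref{t:Trees} to restrict supports to leaves) that $\pi(K_{1,3},\{a,b\})=8$, witnessed by the maximal unsolvable configuration $d^{7}$, while $\pi(K_{1,3},\{a,c\})=7$, witnessed by $\{b^{5},d^{1}\}$. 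Thus the shift strictly \emph{decreases} the pebbling number, and your monotone chain $\pi(D_{0})\le\pi(D_{1})\le\cdots\le\pi((v^{*})^{|D|})$ breaks at its first link, even though the final inequality $8\le 9$ is true. The failure is structural, not a matter of choosing the shortest path more carefully: concentrating demand can first destroy the extremal configurations adapted to spread-out demand (here $d^{7}$) before the stacked extremal configuration becomes large, so no edge-by-edge exchange of the kind you describe in paragraph three can be made monotone. Your fallback potential-function idea is closer in spirit to how Theorem \ref{t:CoverPebb} was proved for positive targets, but as the paper notes, extremal configurations for non-positive $D$ need not be stacked (Chung configurations already show this), so the weight functional $\alpha$ does not dominate in the required direction without substantial new structural input.
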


Herscovici, et al. \cite{HHHtPebb} showed that the conjecture is true for trees, cycles, complete graphs, and cubes. 
Hurlbert and Seddiq \cite{HurlSedd} verified it for the families of 2-paths and Kneser graphs $K(m,2)$.
Later, the authors of \cite{AlcoHurlPowers} proposed the more powerful Strong Target Conjecture.

\begin{cnj}[Strong Target Conjecture] 
Every graph $G$ satisfies $\pi(G, D) \leq \pi_{|D|}(G) - s(D) + 1$ for every target $D$.
\label{c:STC}
\end{cnj}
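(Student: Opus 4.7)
The plan is to proceed by induction on $s(D)$, reducing the general case to the already-known bound $\p(G,v^{|D|})\le\p_{|D|}(G)$ which furnishes the base case $s(D)=1$. An equivalent reformulation is helpful: it suffices to show that for every $D$-unsolvable configuration $C$ on $G$ there is a vertex $v\in V$ and a $v^{|D|}$-unsolvable configuration $C^+$ with $|C^+|\ge|C|+s(D)-1$; applying this to a $D$-extremal $C$ yields $|C|+s(D)-1\le\p_{|D|}(G)-1$, i.e. $\p(G,D)\le\p_{|D|}(G)-s(D)+1$.

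The inductive step requires what I will call a \textbf{consolidation lemma}: given distinct $u,v\in\dD$ and a $D$-unsolvable $C$, there exists a $D'$-unsolvable configuration $C^+$ with $|C^+|=|C|+1$, where $D'=D-u+v$ transfers one unit of demand from $u$ to $v$. Since $s(D')\le s(D)$, with strict decrease when $D(u)=1$, iterating consolidations until all demand rests at a single vertex $v^{|D|}$ accumulates exactly $s(D)-1$ additional pebbles, which is precisely the savings the conjecture predicts. The natural recipe for $C^+$ is to put the extra pebble on a vertex $w$ maximizing the weight $\a(w,D')=\sum_{x\in\dD'}D'(x)2^{\dist(x,w)}$ from Theorem \ref{t:CoverPebb}; intuitively, this is the location from which $D'$ is hardest to satisfy, so it is the right place to preserve unsolvability.

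To prove the consolidation lemma I would argue by contradiction: if $C^+=C+w$ admitted a $D'$-solution $\s$, then by the No-Cycle Lemma (Lemma \ref{l:NCL}) we may assume $G_\s$ is acyclic and $\s$ is greedy for each target. The single extra pebble $w$ can participate in at most one target's sub-solution $\s_x$, and by a delicate accounting one would try to redirect the flow of pebbles so as to produce a $D$-solution of $C$, contradicting $D$-unsolvability. The accounting hinges on the observation that solving one less pebble at $u$ (compared to $D$) releases $2^{\dist(z,u)}$ pebbles at the contributing vertex $z$, which should dominate the $2^{\dist(z,v)}$ cost of solving one extra pebble at $v$ provided $w$ is chosen on the $v$-side of the solution subgraph.

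The main obstacle is, of course, the consolidation lemma in full generality: on trees, the unique-path structure together with Theorem \ref{t:Trees} and Chung configurations makes the flow-redirection argument feasible, but on arbitrary graphs, multiple shortest paths, cycles, and non-monotone interactions between pebbling moves defeat any direct transport argument. I expect that a complete proof would either require a new structural invariant (perhaps a fractional or linear-programming relaxation of pebbling that behaves monotonically in $s(D)$), or a reduction to a known class such as chordal graphs via Theorem \ref{t:Simplicial} combined with a strengthening of Conjecture \ref{c:Simplicial}. The tree case proved in this paper (Theorem \ref{t:Trees}) together with Fact \ref{f:BasicBds} gives evidence that the bound is tight in the correct places, but the passage from trees to general graphs remains the substantive gap.
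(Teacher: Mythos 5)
First, note that the statement you are trying to prove is stated in the paper as a \emph{conjecture} (attributed to \cite{AlcoHurlPowers}); the paper offers no proof of it and only records that it is known for trees and for powers of paths. So there is no proof in the paper to compare yours against, and what you have written is, as you yourself acknowledge, a research program rather than a proof: everything is delegated to an unproven ``consolidation lemma.''

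More importantly, that lemma is false as stated, already on $K_2=P_2$ with vertices $u,v$. Take $\dD=\{u^2,v\}$ and $C=\{v^4\}$: two moves $v\mapsto u$ deliver the demanded $2$ pebbles to $u$ but empty $v$, so $C$ is $D$-unsolvable. Consolidating toward $v$ gives $\dD'=\{u,v^2\}$, and one checks that \emph{every} configuration of size $5$ on $P_2$ is $D'$-solvable (e.g.\ $\{u^5\}$, $\{v^5\}$, and all mixtures), so $\pi(P_2,D')=5$ and no $D'$-unsolvable $C^+$ with $|C^+|=|C|+1=5$ exists. Consolidating in the other direction happens to work here, which shows the lemma could at best hold for a carefully chosen direction of transfer --- but then you must prove that a globally consistent choice exists, and that is the entire difficulty. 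There is also an arithmetic inconsistency in your iteration: each consolidation moves one \emph{unit} of demand and claims to add one pebble, so collapsing $D$ onto a single vertex $v$ takes $|D|-D(v)$ steps, not $s(D)-1$; if the lemma held per unit of demand you would be proving the strictly stronger bound $\pi(G,D)\le\pi_{|D|}(G)-|D|+\max_x D(x)$, which is not what the conjecture asserts and should itself make you suspicious of the lemma. The base case $\p(G,v^{|D|})\le\p_{|D|}(G)$ is of course just the definition of $\p_{|D|}(G)$, so the entire content of the conjecture lives in the step you have not supplied.
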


They proved that trees and powers of paths\footnote{The $k^{\rm th}$ {\it power} $G^{(k)}$ of a graph $G$ adds to $G$ all edges $xy$ with $\dist_G(x,y)\le k$.} $P_n^{(k)}$
satisfies this stronger conjecture for all $n$ and $k$.
In fact, the authors of \cite{AlcoHurlPowers} used these facts to derive the formula for $\p(P_n^{(k)})$.

The truth of Conjectures \ref{c:WTC} and \ref{c:STC} may prove to be a useful tool in proving results for more general families of graphs. 
For example, it has been conjectured that the pebbling numbers of chordal graphs of a certain type can be calculated in polynomial time (see \cite{AlcoHurlSplit}). 
Furthermore, the use of general targets could be helpful in proving Graham’s conjecture (see \cite{Chung}) that $\pi(G_1\mathbin{\gbox}G_2)\leq\pi(G_1)\pi(G_2)$, where $\gbox$ denotes the Cartesian product of connected graphs $G_1$ and $G_2$. 
Herscovici, et al. \cite{HHHGraham}, generalized this to conjecture that $\pi(G_1\mathbin{\gbox}G_2, D_1 \times D_2)\leq\pi(G_1,D_1)\pi(G_2,D_2)$.

\subsection{Our Results}
\label{ss:Our Results}

In this section we present our primary results, including a verification of Conjecture \ref{c:Simplicial} for trees \rev{and} an algorithm to compute the pebbling numbers $\p(T,D)$ and $\p_t(T)$ that runs in polynomial time.

\begin{thm}
\label{t:Trees}
If $T$ is a tree and $C$ is $D$-extremal then $\supp(C)\sse L(T)$.
\end{thm}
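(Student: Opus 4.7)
I would argue by contradiction. Suppose $C$ is a $D$-extremal configuration with $C(v) \geq 1$ for some interior vertex $v$ of $T$. I aim to produce a $D$-unsolvable configuration of strictly larger size than $C$, contradicting extremality. The candidate is $C^{\star} = C - v + 2u$ for a carefully chosen neighbor $u$ of $v$; since $|C^{\star}| = |C| + 1$, it suffices to prove that for at least one neighbor $u$ of $v$, $C^{\star}$ is still $D$-unsolvable.

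Suppose instead that $C - v + 2u$ is $D$-solvable for \emph{every} neighbor $u$ of $v$; I aim to derive that $C$ itself is $D$-solvable. For each such $u$, fix a minimal $(C - v + 2u, D)$-solution $\sigma_u$, taken to be greedy toward every target in $\dD$ via the No-Cycle Lemma (Lemma~\ref{l:NCL}). The greedy flow decomposition assigns each pebble of $C - v + 2u$ a unique path to a target in $\dD$; in particular, the two pebbles added at $u$ have well-defined targets, and their destinations determine whether $\sigma_u$ routes them across the edge $uv$ (via a step $u \mapsto v$) or keeps them inside the subtree $T_u$ of $T - v$ containing $u$. I split on this dichotomy.

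If for some neighbor $u$ both added pebbles at $u$ are routed through a common step $m = (u \mapsto v)$, then deleting $m$ from $\sigma_u$ and applying the resulting sequence to $C$ yields a valid $(C, D)$-solution: the pebble that $m$ would have produced at $v$ is exactly compensated by the pebble already at $v$ in $C$, and the two pebbles consumed from $u$ by $m$ correspond to the two missing pebbles at $u$ in $C$ versus $C - v + 2u$. Because the greedy decomposition earmarks these two pebbles solely for $m$, their removal does not affect the validity of any other step of $\sigma_u$. This contradicts the $D$-unsolvability of $C$.

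Otherwise, for every neighbor $u$ at least one added pebble at $u$ remains in $T_u$, so $\sigma_u|_{T_u}$ solves $D|_{T_u}$ using $C|_{T_u} + 2u$. Since $C$ is $D$-unsolvable, some subtree $T_{u^{\star}}$ must have a genuine deficit that the free $2u^{\star}$ pebbles were required to resolve. I would then exploit the pebble at $v$ in $C$ together with auxiliary flow into $v$ harvested from the other $\sigma_u|_{T_u}$'s, arguing that these combine to supply two pebbles at $u^{\star}$ and complete a $(C, D)$-solution. The main obstacle is making this last step rigorous: verifying that the single pebble at $v$ combined with the incoming flow is always sufficient to cover the deficit at $u^{\star}$ while keeping all intermediate pebbling steps valid. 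I expect this to require a careful case analysis on the parity of the deficit in each subtree and a delicate argument about how greedy flows from multiple directions combine at $v$, leveraging the edge-disjointness of the components of $T - v$.
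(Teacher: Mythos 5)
Your overall strategy is the single-target smoothing argument (the one behind Fact \ref{f:Leaves}): remove a pebble from an interior vertex $v$ and place two on a well-chosen neighbor $u$, then argue unsolvability is preserved. For a single target this works because there is a canonical choice of $u$ (away from the root), and greedy/No-Cycle reasoning shows $C-v+2u$ solvable implies $C$ solvable. For general $D$ no neighbor is ``away'' from all targets, and your proof of the key claim --- that \emph{some} neighbor $u$ keeps $C-v+2u$ unsolvable --- has a genuine gap. First, the dichotomy you set up is not well posed: pebbles are indistinguishable and a pebbling step consumes two pebbles to produce one, so ``the two added pebbles have well-defined targets'' and ``both added pebbles are routed through a common step $u\mapsto v$'' are not meaningful properties of a solution; the first case can be repaired (if $\sigma_u$ contains \emph{any} step $u\mapsto v$, delete one such step and reorder, using the extra pebble at $v$ in $C$), but the repaired dichotomy is then ``$\sigma_u$ uses a step $u\mapsto v$ or it does not,'' and your second case must cover everything else. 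Second, in that case the inference ``$\sigma_u|_{T_u}$ solves $D|_{T_u}$ using $C|_{T_u}+2u$'' does not follow: $\sigma_u$ may route pebbles from other components through $v$ into $T_u$ (steps $v\mapsto u$ are not excluded by the absence of $u\mapsto v$), so the restriction neither uses only local pebbles nor need satisfy the local demand. Third, the concluding step --- combining the pebble at $v$ with ``auxiliary flow harvested from the other $\sigma_u$'s'' to solve the deficient component --- is exactly the multi-target difficulty and is left unproved, as you acknowledge. Note also that in examples the correct neighbor $u$ is determined by global information (where the dominant stack sits relative to all of $D$), so it is doubtful that a purely local analysis around $v$ can identify it.

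By contrast, the paper does not smooth locally at all. It inducts on $|D|$ and on the number of leaves: it picks a leaf-to-split-vertex path, chooses the target $v$ nearest that leaf, analyzes a minimum solution of $D-w$, and in each case reduces to a smaller target or a tree with fewer leaves (absorbing one branch into a modified demand at the split vertex), with the base case being the path theorem (Theorem \ref{t:Path}), whose proof uses the pebbling arrangement and a convexity comparison against endpoint stacks. If you want to salvage your route, you would need to prove the existence of a good neighbor $u$ by some global argument of comparable strength (e.g., via the superstack/convexity machinery), at which point the local smoothing no longer saves work; as written, the argument establishes the theorem only in the single-target case already covered by Fact \ref{f:Leaves}.
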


We prove Theorem \ref{t:Trees} in Subsection \ref{ss:Trees}.
From this key result, we develop an additional structure that can be imposed on $D$-extremal configurations --- see Lemma \ref{l:SuperStack}.
It is Lemma \ref{l:SuperStack} that enables us to write a formula for $\p(T,D)$ in Corollary \ref{c:TargetPebbTree}.

\begin{thm}
\label{t:Algo}
For every tree $T$ and target $D$, the values of $\p(T,D)$ and $\p_t(T)$ can be computed in $O(\ed{s(D)\cdot n})$ and $O(n^2)$ time, respectively.
\end{thm}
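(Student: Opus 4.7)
The plan is to read the algorithm directly off the structural results already established in the paper. By Theorem \ref{t:Trees} and Lemma \ref{l:SuperStack}, every $D$-extremal configuration has a restricted superstack form supported on $L(T)$, and Corollary \ref{c:TargetPebbTree} converts this into a closed-form expression of $\pi(T,D)-1$ as the maximum of a Chung-style quantity $f(T,D,r)$ over a set $R=R(T,D)$ of candidate roots whose size is $O(s(D))$. The algorithm is then simply to enumerate $R$, evaluate $f(T,D,r)$ for each $r\in R$, and return $1+\max_{r\in R}f(T,D,r)$.

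For a single root $r\in R$, the quantity $f(T,D,r)$ is a weighted sum whose terms depend on the path lengths $l_i$ of a maximum path partition of the rooted tree $(T,r)$. Such a partition can be constructed in $O(n)$ time using the recursive procedure recalled in Subsection \ref{ss:graphdefs}, which Bunde et al.\ \cite{BCCMW} explicitly observed runs in linear time. A single postorder traversal of $(T,r)$ then suffices to accumulate the weights $2^{l_i}$ as dictated by $D$, and to read off $f(T,D,r)$. Thus each candidate root contributes $O(n)$ work, for an overall running time of $O(s(D)\cdot n)$ to compute $\pi(T,D)$.

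For $\pi_t(T)=\max_{v\in V(T)}\pi(T,v^t)$, the target $D=v^t$ has $s(D)=1$, so the algorithm above returns $\pi(T,v^t)$ in $O(n)$ time. Running it once for each of the $n$ vertices of $T$ and taking the maximum yields $\pi_t(T)$ in $O(n^2)$ time. The main obstacle in carrying out this plan is not correctness --- which is handed to us by Corollary \ref{c:TargetPebbTree} --- but the runtime bookkeeping: one must verify both that only $O(s(D))$ candidate roots need be inspected and that the exponential weights $2^{l_i}$ can be accumulated in amortized constant time per vertex as we ascend the path partition. Both follow from a careful pass through the recursive construction of the partition and the superstack structure of Lemma \ref{l:SuperStack}, but each deserves an explicit check before the stated complexities can be claimed.
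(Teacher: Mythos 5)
Your overall architecture matches the paper's: reduce to the formula of Corollary \ref{c:TargetPebbTree}, evaluate a Chung-type quantity at a small set of candidate roots using the linear-time path partition of \cite{BCCMW}, and obtain $\pi_t(T)$ by $n$ runs with $s(D)=1$ (this last part you actually spell out more explicitly than the paper does). The gap is in the claim that does all the work for the $O(s(D)\cdot n)$ bound: you assert that Corollary \ref{c:TargetPebbTree} already expresses $\pi(T,D)-1$ as a maximum over a candidate set $R$ with $|R|=O(s(D))$, but the corollary maximizes $|C_v^*|$ over \emph{all} leaves $v\in L(T)$, and $|L(T)|$ can be $\Theta(n)$; evaluating at every leaf only yields $O(n^2)$. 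The reduction to $O(s(D))$ candidates is an additional argument, not a consequence of the corollary, and your proposal neither constructs $R$ nor proves that restricting the maximum to $R$ is valid --- you defer both to ``a careful pass,'' which is precisely where the content of the theorem lies.

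The paper supplies that missing construction: let $T(D)$ be the convex hull of $D$, with leaves $v_1,\ldots,v_j$, $j\le s(D)$ (found by BFS from any vertex of $\dD$ in $O(n)$ time); for each $i$ compute $d'_i=\sum_{v\in D}D(v)2^{\dist(v_i,v)}$ by running a shortest-path computation on $T(D)$, for $O(s(D)\cdot n)$ total; let $T_i$ be the subtree hanging off $v_i$ outside $T(D)$, take a maximum path partition of $(T_i,v_i)$ with its Chung configuration $C_i$, and let $w_i$ be the far endpoint of its longest path. The key assertion, resting on the superstack structure of Lemma \ref{l:SuperStack} together with Theorem \ref{t:Chung} applied to the rooted subtrees $(T_i,v_i)$, is that the maximum defining $f(T,D)$ is attained at one of the $w_i$, so only these $j\le s(D)$ roots need be inspected, each in $O(n)$ time. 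Without identifying $R$ as $\{w_1,\ldots,w_j\}$ (or something equivalent) and arguing that no leaf outside this set can do better, your algorithm is not fully specified and the $O(s(D)\cdot n)$ bound is not established. (Your second worry, about accumulating the weights $2^{l_i}$, is harmless under the unit-cost arithmetic model the paper implicitly uses.)
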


The proof of Theorem \ref {t:Algo} can be found at the end of Section \ref{s:Proofs}, and also rests on the structure imposed by Lemma \ref{l:SuperStack}.


\section{Proofs}
\label{s:Proofs}

\subsection{Paths} \label{ss:Paths}

Let $F=C + D$ be a pebbling function on a graph. 
Define $\cA = \langle a_1, \ldots, a_m \rangle$ to be the \textit{pebbling arrangement} of $F$, with $m = n +|C|+|D|$, where $\cA = \langle v_1, v_{1,1}, \ldots, v_{1,F(v_1)}, v_2, v_{2,1}, \ldots, v_{2,F(v_2)}, \ldots, v_n, v_{n,1}, \ldots, v_{n,F(v_n)} \rangle$.
For example, in Figure \ref{fig:arrangement_example} we have $G=P_7$ with configuration $\dC=\{v_3^3, v_4^{21}, v_6^5\}$, target $\dD=\{v_1^2, v_2, v_5, v_7^3\}$, and pebbling arrangement 
\[
\cA = 
\langle v_1, v_{1,1}, v_{1,2}, 
v_2, v_{2,1}, 
v_3, v_{3,1}, v_{3,2}, v_{3,3}, 
v_4, v_{4,1}, \ldots, v_{4,21},
v_{5}, v_{5,1}, 
v_6, v_{6,1}, \ldots, v_{6,5}, 
v_7, v_{7,1}, v_{7,2}, v_{7,3}\rangle,
\]
with $m=7+29+7=43$.
We leave it to the reader to check that $C$ is a maximal $D$-unsolvable configuration.

Next, let $\cC$ be the set of maximal $D$-unsolvable configurations $C$ having $\supp(C) \subseteq \{v_1,v_h,v_n\}$. 

\begin{figure}[ht]
\begin{center}
\begin{tikzpicture}[scale=.95,
  typeA/.style={draw, circle, fill=black, minimum size=1.5mm, inner sep=0pt},
  typeB/.style={}
  highlight/.style={line width=4pt, draw=yellow!50, line cap=round},
  highlight1/.style={line width=8pt, draw=blue!50, line cap=round},
  highlight2/.style={line width=4pt, draw=green!50, line cap=round}
  ]
    
  \node[typeA] (A) [label={[label distance=-7mm]90:{\color{black} $v_1$}}] at (0,0) {}; 
  \node[typeA] (B) [label={[label distance=-7mm]90:{\color{black} $v_2$}}] at (2,0) {}; 
  \node[typeA] (C) [label={[label distance=-7mm]90:{\color{black} $v_3$}}] at (4,0) {}; 
  \node[typeA] (D) [label={[label distance=-7mm]90:{\color{black} $v_4=v_h$}}] at (6,0) {}; 
  \node[typeA] (E) [label={[label distance=-7mm]90:{\color{black} $v_5$}}] at (8,0) {}; 
  \node[typeA] (F) [label={[label distance=-7mm]90:{\color{black} $v_6$}}] at (10,0) {}; 
  \node[typeA] (G) [label={[label distance=-7mm]90:{\color{black} $v_7$}}]at (12,0) {}; 
  
  \node[typeB] (H) [label={[label distance=0mm]90:{\color{gren} $C$}}] at (-0.75,0.25) {}; 
  \node[typeB] (I) [label={[label distance=0mm]90:{\color{gren} $C'$}}] at (-0.75,0.75) {};
  \node (J) [label={[label distance=0mm]90:{\color{gren} $C''$}}] at (-0.75,1.25) {}; 
  \node (K) [label={[label distance=0mm]90:{\color{gren} $C^*$}}] at (-0.75,1.75){}; 
  \node (L) [label={[label distance=0mm]90:{\color{gren} $C_1$}}] at (-0.75,2.25){};
  \node (M) [label={[label distance=0mm]90:{\color{gren} $C_n$}}] at (-0.75,2.75) {}; 

  \node[typeB] (N) [label={[label distance=0mm]90:{\color{gren} $0$}}] at (0,0.25) {}; 
  \node[typeB] (O) [label={[label distance=0mm]90:{\color{gren} $0$}}] at (0,0.75) {};
  \node (P) [label={[label distance=0mm]90:{\color{gren} $0$}}] at (0,1.25) {}; 
  \node (Q) [label={[label distance=0mm]90:{\color{gren} $0$}}] at (0,1.75){}; 
  \node (R) [label={[label distance=0mm]90:{\color{gren} $211$}}] at (0,2.25){};
  \node (S) [label={[label distance=0mm]90:{\color{gren} $0$}}] at (0,2.75) {}; 

  \node[typeB] (T) [label={[label distance=0mm]90:{\color{gren} $0$}}] at (2,0.25) {}; 
  \node[typeB] (U) [label={[label distance=0mm]90:{\color{gren} $0$}}] at (2,0.75) {};
  \node (W) [label={[label distance=0mm]90:{\color{gren} $0$}}] at (2,1.25) {}; 
  \node (X) [label={[label distance=0mm]90:{\color{gren} $0$}}] at (2,1.75){}; 
  \node (Y) [label={[label distance=0mm]90:{\color{gren} $0$}}] at (2,2.25){};
  \node (Z) [label={[label distance=0mm]90:{\color{gren} $0$}}] at (2,2.75) {}; 

   \node[typeB] (AA) [label={[label distance=0mm]90:{\color{gren} $3$}}] at (4,0.25) {}; 
  \node[typeB] (AB) [label={[label distance=0mm]90:{\color{gren} $0$}}] at (4,0.75) {};
  \node (AC) [label={[label distance=0mm]90:{\color{gren} $0$}}] at (4,1.25) {}; 
  \node (AD) [label={[label distance=0mm]90:{\color{gren} $0$}}] at (4,1.75){}; 
  \node (AE) [label={[label distance=0mm]90:{\color{gren} $0$}}] at (4,2.25){};
  \node (AF) [label={[label distance=0mm]90:{\color{gren} $0$}}] at (4,2.75) {}; 

   \node[typeB] (AG) [label={[label distance=0mm]90:{\color{gren} $21$}}] at (6,0.25) {}; 
  \node[typeB] (AH) [label={[label distance=0mm]90:{\color{gren} $7$}}] at (6,0.75) {};
  \node (AI) [label={[label distance=0mm]90:{\color{gren} $12$}}] at (6,1.25) {}; 
  \node (AJ) [label={[label distance=0mm]90:{\color{gren} $29$}}] at (6,1.75){}; 
  \node (AK) [label={[label distance=0mm]90:{\color{gren} $0$}}] at (6,2.25){};
  \node (AL) [label={[label distance=0mm]90:{\color{gren} $0$}}] at (6,2.75) {}; 

  \node[typeB] (AM) [label={[label distance=0mm]90:{\color{gren} $0$}}] at (8,0.25) {}; 
  \node[typeB] (AN) [label={[label distance=0mm]90:{\color{gren} $0$}}] at (8,0.75) {};
  \node (AO) [label={[label distance=0mm]90:{\color{gren} $0$}}] at (8,1.25) {}; 
  \node (AP) [label={[label distance=0mm]90:{\color{gren} $0$}}] at (8,1.75){}; 
  \node (AQ) [label={[label distance=0mm]90:{\color{gren} $0$}}] at (8,2.25){};
  \node (AR) [label={[label distance=0mm]90:{\color{gren} $0$}}] at (8,2.75) {}; 

  \node[typeB] (AS) [label={[label distance=0mm]90:{\color{gren} $5$}}] at (10,0.25) {}; 
  \node[typeB] (AT) [label={[label distance=0mm]90:{\color{gren} $5$}}] at (10,0.75) {};
  \node (AU) [label={[label distance=0mm]90:{\color{gren} $0$}}] at (10,1.25) {}; 
  \node (AV) [label={[label distance=0mm]90:{\color{gren} $0$}}] at (10,1.75){}; 
  \node (AW) [label={[label distance=0mm]90:{\color{gren} $0$}}] at (10,2.25){};
  \node (AX) [label={[label distance=0mm]90:{\color{gren} $0$}}] at (10,2.75) {}; 

  \node[typeB] (AY) [label={[label distance=0mm]90:{\color{gren} $0$}}] at (12,0.25) {}; 
  \node[typeB] (AZ) [label={[label distance=0mm]90:{\color{gren} $0$}}] at (12,0.75) {};
  \node (BA) [label={[label distance=0mm]90:{\color{gren} $0$}}] at (12,1.25) {}; 
  \node (BB) [label={[label distance=0mm]90:{\color{gren} $0$}}] at (12,1.75){}; 
  \node (BC) [label={[label distance=0mm]90:{\color{gren} $0$}}] at (12,2.25){};
  \node (BD) [label={[label distance=0mm]90:{\color{gren} $166$}}] at (12,2.75) {}; 

   \node[typeB] (BE) [label={[label distance=0mm]90:{\color{red} $D$}}] at (-0.75,-1.2) {}; 
  \node[typeB] (BF) [label={[label distance=0mm]90:{\color{red} $D'=D''$}}] at (-1,-1.70) {};
  \node (BG) [label={[label distance=0mm]90:{\color{red} $D_L$}}] at (-0.75,-2.2) {}; 
  \node (BH) [label={[label distance=0mm]90:{\color{red} $D^-_R$}}] at (-0.75,-2.7){}; 
  \node (BI) [label={[label distance=0mm]90:{\color{red} $D^+_R$}}] at (-0.75,-3.2){};

   \node[typeB] (BJ) [label={[label distance=0mm]90:{\color{red} $2$}}] at (0,-1.2) {}; 
  \node[typeB] (BK) [label={[label distance=0mm]90:{\color{red} $0$}}] at (0,-1.70) {};
  \node (BL) [label={[label distance=0mm]90:{\color{red} $2$}}] at (0,-2.2) {}; 
  
   \node[typeB] (BM) [label={[label distance=0mm]90:{\color{red} $1$}}] at (2,-1.2) {}; 
  \node[typeB] (BN) [label={[label distance=0mm]90:{\color{red} $0$}}] at (2,-1.70) {};
  \node (BO) [label={[label distance=0mm]90:{\color{red} $1$}}] at (2,-2.2) {}; 

   \node[typeB] (BP) [label={[label distance=0mm]90:{\color{red} $0$}}] at (4,-1.2) {}; 
  \node[typeB] (BQ) [label={[label distance=0mm]90:{\color{red} $0$}}] at (4,-1.70) {};

   \node[typeB] (BR) [label={[label distance=0mm]90:{\color{red} $0$}}] at (6,-1.2) {}; 
  \node[typeB] (BS) [label={[label distance=0mm]90:{\color{red} $0$}}] at (6,-1.70) {};

  \node[typeB] (BT) [label={[label distance=0mm]90:{\color{red} $1$}}] at (8,-1.2) {}; 
  \node[typeB] (BU) [label={[label distance=0mm]90:{\color{red} $1$}}] at (8,-1.70) {};
  \node (BV) [label={[label distance=0mm]90:{\color{red} $1$}}] at (8,-2.7) {}; 

   \node[typeB] (BW) [label={[label distance=0mm]90:{\color{red} $0$}}] at (10,-1.2) {}; 
  \node[typeB] (BX) [label={[label distance=0mm]90:{\color{red} $0$}}] at (10,-1.70) {};

  \node[typeB] (BY) [label={[label distance=0mm]90:{\color{red} $3$}}] at (12,-1.2) {}; 
  \node[typeB] (BZ) [label={[label distance=0mm]90:{\color{red} $3$}}] at (12,-1.70) {};
  \node (CA) [label={[label distance=0mm]90:{\color{red} $3$}}] at (12,-3.2) {}; 
  
  \draw[thick] (A) -- (B);
  \draw[thick] (B) -- (C);
  \draw[thick] (C) -- (D);
  \draw[thick] (D) -- (E);
  \draw[thick] (E) -- (F);
  \draw[thick] (F) -- (G);
\end{tikzpicture}
\end{center}
\caption{A path $P$ with both endpoints in $D$, showing the sequence of configurations $C$, $C'$, ..., and $C_n$, (in {\color{gren} green}), and targets $D$, $D'$, ..., and $D_R^+$ (in {\color{red} red}) that are used in the proof of Theorem \ref{t:Path}.}
\label{fig:arrangement_example}  
\end{figure}

\begin{thm}
\label{t:Path}
If $P$ is a path and $C$ is $D$-unsolvable then there is a $D$-unsolvable configuration $C^*$ with $|C^*|\ge |C|$ and $\supp(C^*)\sse L(P)$.
\end{thm}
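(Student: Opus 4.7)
The plan is to trace through the chain of configurations $C \to C' \to C'' \to C^*$ depicted in Figure \ref{fig:arrangement_example}, each step preserving $D$-unsolvability, concluding with $C^*$ supported in $L(P)$ and of size at least $|C|$. The pivot vertex $v_h$ and the family $\cC$ introduced just before the theorem statement are central to the argument.

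First I would choose $v_h$ and decompose the demand as $D = D_L + D_R$ with $\supp(D_L) \subseteq \{v_1,\ldots,v_h\}$ and $\supp(D_R) \subseteq \{v_h,\ldots,v_n\}$, calibrating the choice to $C$ so that the $D$-unsolvability is witnessed on the right: after subtracting from $C$ a minimum sub-configuration $X$ that just-barely solves $D_L$, the residual $C' = C - X$ is $D_R$-unsolvable on the sub-path $v_h,\ldots,v_n$ (the symmetric choice of $v_h$ handles a left-side deficit). This corresponds to the $C \to C'$ transition and the target update $D \to D' = D_R$ in the figure. I would then consolidate $C'$ into $C''$ of equal size, supported on $\{v_h\}$, by hypothetically relocating each pebble at $v_j$ with $j > h$ one-for-one to $v_h$. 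Every such relocation strictly increases the distance to every target in $\supp(D_R)$, so the pebbling value of $C''$ at each $D_R$-target is no greater than that of $C'$; hence $C''$ remains $D_R$-unsolvable. The configuration $C^*$ is then formed by reinflating the left side: the $|X|$ pebbles previously subtracted are placed back at $v_h$, giving $|C^*| = |C|$ and $\supp(C^*) \subseteq \{v_h\}$. A counting argument---the minimum $D_L$-cost from $v_h$ is exactly $|X|$, and $C''$ is $D_R$-unsolvable---ensures that no simultaneous $D_L$- and $D_R$-solution from $C^*(v_h)$ pebbles exists, so $C^*$ is $D$-unsolvable.

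The final step eliminates the support at $v_h$. The function $\alpha(v,D) = \sum_i D(v_i)\,2^{|i-v|}$ is convex along $P$, so its maximum on $V(P)$ is attained at $v_1$ or $v_n$. One of the leaf-stacked configurations $v_1^{\alpha(v_1,D)-1}$ or $v_n^{\alpha(v_n,D)-1}$ is therefore $D$-unsolvable and of size at least $\alpha(v_h,D)-1 \ge |C^*|$, furnishing the desired $C^*$ with $\supp(C^*) \subseteq L(P)$. This corresponds to the final arrow from $C^*$ to $C_1$ or $C_n$ in the figure.

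The main obstacle is the pivot step. Establishing, for each $D$-unsolvable $C$, a choice of $v_h$ for which the splitting $C = X + C'$ indeed forces the residual to be $D_R$-unsolvable requires a careful optimization over the ways of allocating $C$'s pebbles between the two halves of $D$, coupled with a minimum-cost argument on a path. The subsequent hypothetical redistribution defining $C''$ is not a legal pebbling step and so must be justified directly by monotonicity of pebbling value in the distance parameter and the exponential cost law $2^{\mathrm{dist}}$. Finally, the reinflation $C'' \to C^*$ must be shown to preserve $D$-unsolvability \emph{globally}---not merely $D_R$-unsolvability---which is where the tight matching between $|X|$ and the minimum $D_L$-cost from $v_h$ is used.
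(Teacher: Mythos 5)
Your consolidation step is where the argument breaks. You move every pebble of $C'$ at a vertex $v_j$ with $j>h$ onto $v_h$ and claim that ``every such relocation strictly increases the distance to every target in $\supp(D_R)$.'' That is false: for a target $v_k$ with $h\le k<(j+h)/2$ the distance \emph{decreases} (e.g.\ if $v_h$ or $v_{h+1}$ is itself a target, pebbles slide onto or next to it). Concretely, if $\dD_R=\{v_{h+1}\}$ and $C'$ consists of two pebbles on $v_n$ with $n-h\ge 3$, then $C'$ is $D_R$-unsolvable but your $C''=v_h^2$ solves $D_R$ in one step. Worse, the intermediate goal itself is unattainable: you want a $D$-unsolvable $C^*$ with $\supp(C^*)\subseteq\{v_h\}$ and $|C^*|=|C|$, but on $P_3$ with $\dD=\{v_1,v_3^{100}\}$ the configuration $C=\{v_1^{400},v_3^{99}\}$ is $D$-unsolvable of size $499$, while the largest $D$-unsolvable stack at any single vertex has size $\alpha(v_1,D)-1=400$ (and only $201$ at $v_2$, $103$ at $v_3$). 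So no sequence of repairs to the relocation/reinflation steps can deliver a single-stack $C^*$ of size $|C|$; extremal configurations on paths genuinely need pebbles at both leaves. You also flag but do not resolve the pivot step (choosing $v_h$ so that the residual is $D_R$-unsolvable), and your assertion that the minimum $D_L$-cost from $v_h$ equals $|X|$ is unjustified, since $X$ may sit strictly left of $v_h$.

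For comparison, the paper avoids all of this by a double induction on $n$ and $|D|$. It takes the prefix-minimal solvable segment in the pebbling arrangement of $C+D$, so the No-Cycle Lemma forces every step of that partial solution $\s$ to move leftward; this is exactly what legitimizes replacing $C[\s]$ by the stack $v_h^{|C[\s]|}$ (which then either equals $C[\s]$ or is unsolvable for the left targets). The right part is handled by the induction hypothesis, which yields $C''$ supported on $\{v_h,v_n\}$ --- two stacks, not one --- and only then does a convexity computation (comparing $2|C^*|$ with $|C_1|+|C_n|$ via the split of targets into $D_L$, $D_R^-$, $D_R^+$) eliminate the pebbles at $v_h$. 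Your final step, using convexity of $\alpha(\cdot,D)$ to push a stack to a leaf, is the right idea and matches the spirit of that last computation, but it is being applied to an object your construction cannot produce.
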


\begin{proof}
Let $G=P_n$, with $|D|=t$.
We use induction on $n$ and $t$.

When $n \le 2$ the statement is trivial, and when $t=1$ the statement is proved by Theorem \ref{t:Simplicial} (since $s(D)=1$).
Now we may assume that $n > 2$ and $t > 1$.

Suppose that some endpoint (without loss of generality, $v_1$) is not in $D$. Suppose that the first target is on $v_k$, for some $k$, where $1<k<n$.
If $C_{[1,k-1]}$ solves $v_k$, we use the necessary pebbles to solve $v_k$.
Let $\sigma$ be a minimum $(C_{[1,k-1]},v_k)$-solution.
$C[\s]$ is the sub-configuration of pebbles used in $\sigma$.
Define $C'=C-C[\s]$ and $D'=D-v_k$.
Then $C'$ is not $D'$-solvable, and so by induction on $t$ (since $|D'|<t$) we know that there exists a $D'$-unsolvable configuration $C''$, such that $|C''|=|C'|$ and every interior vertex is empty.
Finally, define $\dC^* = \dC'' + v_1^{|C[\s]|}$.
Then $C^*$ is $D$-unsolvable with every interior vertex empty.
If $C_{[1,k-1]}$ doesn't solve $v_k$, we then remove $C_{[1,k-1]}$.
Since $C_{[1,k-1]}$ doesn't solve $v_k$ we define $C'=C_{[k,n]}$ and set $D'=D$.
We remove vertices $v_1$ through $v_{k-1}$ and obtain a shorter path $P'=P_{[k,n]}$; then $C'$ is $D'$-unsolvable on $P'$.
We can apply induction on the number of vertices of $P'$ to obtain the configuration $C''$, which is $D'$-unsolvable of size $|C'|$ and has $\supp(C'')\subseteq\{v_k,v_n\}$.
Let $\dC^* = v_1^{|C_{[1,k]}| + C''(v_k)} + (\dC''-v_k^{C''(v_k)})$. 
Since $C''$ is $D$-unsolvable, $C^*$ is also $D$-unsolvable.

If both endpoints are in $D$ we define $\cA = \langle a_1, \ldots, a_m \rangle$ be the pebbling arrangement (as shown before the statement of Theorem \ref{t:Path} and in Figure \ref{fig:arrangement_example}) of $C + D$, with $m = n +|C|+|D|$.
Let $\cC$ be the set of maximal $D$-unsolvable configurations $C$ having $\supp(C) \subseteq \{v_1,v_h,v_n\}$. 

Let $j$ be the minimum such that $C_{\langle 1,j \rangle}$ solves $D_{\langle 1,j\rangle}$, and let $\s$ denote this solution.
Define $h=h(j)$ to be such that each pebble $a_{j, i}$ is on vertex $v_h$. 
(In the example of Figure \ref{fig:arrangement_example} we have $h=4$, $|C[\s]|=3+14=17$, and $j=24$, since the $17^{\rm th}$ pebble in $C[\s]$ is $v_{4,14}$, which is the $24^{\rm th}$ element in $\cA$.)
Note that $a_j\in C[\s]$, which implies that $D(h)=0$.
Hence $1<h<n$.
Moreover, the No-Cycle Lemma \ref{l:NCL} implies that every step in $\s$ moves from right to left (i.e. from some $v_k$ to $v_{k-1}$).
Therefore, the configuration $v_h^{|C[\s]|}$ either solves $D_{\langle 1,j\rangle}$ exactly (by being equal to $C[\s]$) or is $D_{\langle 1,j\rangle}$-unsolvable.   
Define $C'=C-C[\s]$, $D'=D-D_{\langle 1,j\rangle} = D_{\langle j+1,m\rangle} = D_{[h,n]}$, and let $\cA'=\langle a'_1,\ldots, a'_{m'}\rangle$ be the pebbling arrangement of $C'+D'$, where $m'=(n-h+1)+|C'|+|D'|$.
(In Figure \ref{fig:arrangement_example} we have $\dC'=\{v_6^7\}$, $\dD'=\{v_5,v_7^3\}$ and \[
\cA' = 
\langle v_1, 
v_2,
v_3, 
v_4, v_{4,1}, v_{4,2}, \ldots, v_{4,7},
v_{5}, v_{5,1}, 
v_6, v_{6,1}, \ldots, v_{6,5}, 
v_7, v_{7,1}, v_{7,2}, v_{7,3}\rangle
\] with $m'=7+12+4=23$.) 
Then $\cA'$ is $D'$-extremal.

We now have that $|D'|<|D|$, so we can use induction on $t$ to know that there is a $D'$-unsolvable configuration $C''$ on $P_{[h,n]}$ with $|C''|=|C'|$ and every interior vertex is empty; i.e. $\supp(C'')\subseteq\{v_h,v_n\}$.
We move $C[\s]$ to $v_{h}$; i.e. define $\dC^*=\dC''+v_{h}^{|C[\s]|}$.
(In Figure \ref{fig:arrangement_example}, we have $|C''|=|C'|=12$, with $\dC'=\{v_4^{12}\}$, $\dD''=\dD'=\{v_6,v_7^3\}$,  $\dC^*=\dC''+v_{h}^{|C[\s]|}$, and $|C^*|=12+17=29$.)
We argue by contradiction that $C^*$ is $D$-unsolvable.
Indeed, suppose that $C^*$ is $D$-solvable.
Then $D_{\langle 1,j\rangle}$ must use all of $v_h^{|C[\s]|}$, and possibly some pebbles from $C''$.
We are then left with a sub-configuration of $C''$ to solve $D''=D_{\langle j+1,m\rangle}$, which is impossible since $C''$ is $D''$-unsolvable.
Hence $C^*$ is $D$-unsolvable.

We now let $C_1$ and $C_n$ be maximal $D$-unsolvable configurations stacked on $v_1$ and $v_n$, respectively, and prove by convexity that either $|C_1|\ge |C^*|$ or $|C_n|\ge |C^*|$. 
(See Figure \ref{fig:arrangement_example}: we need $2 + 1 \cdot 2 + 1 \cdot 2^4 + 3 \cdot 2^6=212$ pebbles stacked on $v_1$ to solve $D$, so $|C_1|=211$; similarly, $|C_7|=166$.
Observe that $29=|C^*|<(|C_1|+|C_7|)/2$.)

Recall that $\cC$ is the set of maximal $D$-unsolvable configurations $C$ having $\supp(C) \subseteq \{v_1,v_h,v_n\}$, which we observe is nonempty because of the existence of $C^*$.
We use the convexity of binary exponentiation to prove that the largest configuration in $\cC$ has no pebbles on $v_h$.
Let $\dC^+ = \dC^*+v_1$; then $C^*$ solves $D$ via the solution we call $\s^+$.
We denote by $D_L$ the multiset of all target vertices $v_i$ with $i<h$ that are solved by $\s^+$ using pebbles from $v_h$; 
$D_R^-$ the multiset of all target vertices $v_j$ with $j>h$ that are solved by $\s^+$ using pebbles from $v_h$; 
and $D_R^+$ the multiset of all target vertices $v_k$ with $k>h$ that are solved by $\s^+$ using pebbles from $v_n$. 
(For example, in Figure \ref{fig:arrangement_example} we have that $\dD_L=\{v_1^2,v_2^1\}$, $\dD_R^-=v_5$, and $\dD_R^+=v_7$.)
Then
\begin{align*}
    |C^*| = |C^+|-1
        &= |C^+(v_h)| + |C^+(v_n)| - 1\\
        &= \left(\sum_{v_i \in D_L} 2^{h-i} + \sum_{v_j \in D_R^-} 2^{j-h} + \sum_{v_k \in D_R^+}2^{n-k}\right) - 1.
\end{align*}
As noted above, we know that $1<h<n$.
Suppose that some $v_j\in D_R^-$ has $j-h<n-j$; i.e. $j<(n+h)/2$.
Then define $C^{*'}=C^*-v_h^{2^{j-h}}+v_n^{2^{n-j}}$.
We see that $C^{*'}\in\cC$ and $|C^{*'}|>|C^*|$, and so we may assume that no such $j$ exists; that is, every $v_j\in \dD_R^-$ has $j\ge (n+h)/2$.
This implies that every $v_k\in \dD_R^+$ has $k\ge (n+h)/2 > (n+1)/2$; i.e. $k-1> n-k$.
Now we have
\begin{align*}
    2|C^*| 
        &= \left(2\sum_{v_i \in D_L}2^{h-i} + 2\sum_{v_j \in D_R^-}2^{j-h} + 2\sum_{v_k \in D_R^+}2^{n-k}\right) - 2\\
        &\le \left(\sum_{v_i \in D_L}2^{n-i} + \sum_{v_j \in D_R^-}2^{j-1} + \sum_{v_k \in D_R^+} (2^{n-k}+2^{n-k})\right) - 2\\
        &< \left(\sum_{v_i \in D_L}(2^{i-1} + 2^{n-i})+\sum_{v_j \in D_R^-}(2^{j-1}+2^{n-j}) + \sum_{v_k \in D_R^+}(2^{k-1}+2^{n-k})\right)-2\\
        &= |C_1|+|C_n|.
\end{align*}
Hence $|C^*|<(|C_1|+|C_n|)/2$, and so either $|C_1|>|C^*|$ or $|C_n|>|C^*|$.
This finishes the proof.
\end{proof}

Let $D$ be a target on a graph $G$ with $\dD=\{v_{i_1},\ldots,v_{i_t}\}$ so that $|D|=t$.
For a vertex $v$ let $\a_G(v,D)$ be the minimum number such that the configuration $v^{\a_G(v,D)}$ solves $D$ on $G$.
(Notice that this is the same function $\a$ as used in Theorem \ref{t:CoverPebb}, although $D$ here is not necessarily positive.)
For example $v_1^{2^k}$ solves $v_{k+1}$ on $P_{k+1}$ but $v_1^{2^k-1}$ does not, and so $\a_{P_{k+1}}(v_1,v_{k+1})=2^k$.
For ease of reading, we shall write $\a$ in place of $\a_G$ when the graph $G$ is understood.

Now let $G=P_n$ be the path $v_1\cdots v_n$.
For $1\le j\le t$ define on $G$ $\dD_j^L=\{v_{i_1},\ldots,v_{i_j}\}$ and $\dD_j^R=\{v_{i_j}\ldots,v_{i_t}\}$.
Additionally, define the configuration
\[\dF_j = \{v_1^{\a_{P_n}(v_1,\dD_j^L)-1},v_n^{\a_{P_n}(v_n,\dD_j^R)-1}\};\]
that is, the number of pebbles on $v_1$ is one shy of the number that would solve all of $\dD_j^L$, and the number of pebbles on $v_n$ is one shy of the number required to solve all of $\dD_j^R$.
Finally, let $f_j=|F_j|$ for $1\le j\le t$, and define $f(D,n)=\max\{f_1,f_t\}+1$.
The following fact is evident.

\begin{fct}
\label{f:StackFormula}
Given the target $\dD=\{v_{i_1},\ldots,v_{i_t}\}$ on the path $P_n$ with vertex $v$, let $d_j=\dist(v,v_{i_j})$. 
Then $\a_G(v,D)=\sum_{j=1}^t 2^{d_j}$.
\end{fct}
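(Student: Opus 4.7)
The plan is to establish matching upper and lower bounds for $\a_G(v,D)$; both follow the usual weight-based intuition from single-target pebbling, adapted to multiple targets by partitioning the solution. For the upper bound $\a_G(v,D)\le\sum_{j=1}^{t}2^{d_j}$, I would split the stack of $\sum_j 2^{d_j}$ pebbles at $v$ into $t$ disjoint blocks of sizes $2^{d_j}$, one per listed target in $\dD$ (counted with multiplicity), and push block $j$ along the unique $v$-$v_{i_j}$ subpath in $P_n$ to deliver one pebble at $v_{i_j}$. Since $2^{d_j}$ pebbles stacked at one endpoint of a length-$d_j$ subpath suffice to deposit a single pebble at the opposite endpoint, concatenating these $t$ sub-solutions gives a valid $(v^{\sum_j 2^{d_j}},D)$-solution.

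For the lower bound, let $\s$ be a minimal $(v^k,D)$-solution with $k=\a_G(v,D)$. By the No-Cycle Lemma \ref{l:NCL}, $G_\s$ is acyclic; combined with the fact that all initial pebbles sit on $v$, every step of $\s$ must move pebbles strictly away from $v$. As noted in Subsection \ref{ss:pebbdefs}, $\s$ partitions into sub-solutions $\s_u$ over $u\in\dD$ (listed with multiplicity), and the pebbles $C[\s_u]$ consumed by each $\s_u$ form pairwise disjoint sub-configurations of $v^k$. The standard weight argument for a single target $u$ (assigning weight $2^{-\dist(u,x)}$ to each pebble at $x$, a quantity that is non-increasing under pebbling steps and strictly conserved only by moves greedy toward $u$) then forces $|C[\s_{v_{i_j}}]|\ge 2^{d_j}$ for each $j$. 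Summing over $j$ yields $k\ge \sum_{j=1}^{t}2^{d_j}$.

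The only subtlety I expect is confirming that the sub-configurations $C[\s_u]$ really are pairwise disjoint; this is immediate from the partition of the moves of $\s$ among the $\s_u$, since each initial pebble at $v$ is consumed by at most one move that removes it from $v$ and therefore contributes to at most one $\s_u$. No separate case analysis for targets on the left vs.~right of $v$ is needed, since the weight argument is applied locally to each target in turn, and the path structure enters only through the values $d_j=\dist(v,v_{i_j})$.
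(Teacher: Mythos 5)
Your proof is correct. The paper offers no argument for Fact \ref{f:StackFormula} at all --- it is simply declared ``evident'' --- and what you write is exactly the standard justification being implicitly invoked: split the stack at $v$ into blocks of size $2^{d_j}$ (one per occurrence of a target in the multiset $\dD$) for the upper bound, and for the lower bound partition a solution into the sub-solutions $\s_u$, note their used sub-configurations are pairwise disjoint, and apply the weight function $2^{-\dist(u,x)}$ to each to get $|C[\s_u]|\ge 2^{\dist(u,v)}$. One small remark: your intermediate claim that acyclicity plus the stacked start forces every step away from $v$ is true but unnecessary, since the weight with respect to a fixed target is non-increasing under \emph{every} pebbling step, greedy or not.
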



\begin{lem}
\label{l:Unimodal} 
Given the target $\dD=\{v_{i_1},\ldots,v_{i_t}\}$ on the path $P_n$, let $f_k$ be defined as above.
Then the sequence $f_1,\ldots, f_t$ is unimodal; in particular, 
$f_h>f_{h-1}$ when $i_h \leq (n+1)/2 $ and $f_h<f_{h-1}$ when $i_h > (n+1)/2 $.
Therefore $\max_k f_k = \max \{f_1,f_t\}$.
\end{lem}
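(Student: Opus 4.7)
The plan is to establish the lemma by direct computation of the consecutive differences $f_h - f_{h-1}$, exploiting the exponential form of the stacking cost along a path.

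First I would expand each $f_j$ explicitly, using Fact \ref{f:StackFormula}:
\[ f_j \;=\; \sum_{k=1}^{j} 2^{i_k - 1} \;+\; \sum_{k=j}^{t} 2^{n - i_k} \;-\; 2. \]
Going from $j = h-1$ to $j = h$, the left sum gains the single term $2^{i_h - 1}$ corresponding to the newly absorbed target $v_{i_h}$, while the right sum loses the single term $2^{n - i_{h-1}}$ corresponding to the dropped target $v_{i_{h-1}}$. All other terms telescope, yielding the clean identity
\[ f_h - f_{h-1} \;=\; 2^{i_h - 1} \;-\; 2^{n - i_{h-1}}. \]

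The second step is to read off the sign from this identity: the difference is controlled by whether $i_h - 1$ is larger or smaller than $n - i_{h-1}$, equivalently by whether $i_h + i_{h-1}$ exceeds $n + 1$. Using the monotonicity $i_{h-1} \le i_h$ inherited from the left-to-right ordering of targets along $P_n$, the comparison is governed (up to one boundary case) by whether $i_h$ itself lies in the left or right half of the path relative to the midpoint $(n+1)/2$. Once the threshold is crossed, the sign of the difference stabilizes, since both $i_{h-1}$ and $i_h$ only move to the right as $h$ increases. This single-sign-change property immediately yields unimodality of the sequence $f_1,\ldots,f_t$, and hence $\max_k f_k$ is attained at one of the endpoints $f_1$ or $f_t$.

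I expect the main obstacle to be pinning down the precise boundary behavior, since $f_h - f_{h-1}$ formally depends on both $i_{h-1}$ and $i_h$ rather than on $i_h$ alone. The edge case $2i_h = n+1$ (possible only when $n$ is odd) forces the difference to be zero, and consecutive targets straddling the midpoint require a short argument to guarantee that the sign turns over at most once. A clean way to handle this is to introduce a switch index $h^* = \min\{h : 2^{i_h - 1} \ge 2^{n - i_{h-1}}\}$ and verify the monotonicity of $(f_h)$ on either side of $h^*$ separately; the rest of the argument is then a transparent comparison of $f_1$ and $f_t$ against any interior $f_h$.
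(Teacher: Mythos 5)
Your proposal is correct and follows essentially the same route as the paper's proof: expand each $f_j$ via Fact \ref{f:StackFormula}, telescope consecutive differences, and use the monotonicity of the indices $i_1\le\cdots\le i_t$ to get a single sign change (decreasing, then nondecreasing), so the maximum is attained at $f_1$ or $f_t$. In fact your difference $f_h-f_{h-1}=2^{i_h-1}-2^{\,n-i_{h-1}}$ is the accurate one (since $\dD_j^L$ and $\dD_j^R$ both contain $v_{i_j}$, the second exponent involves $i_{h-1}$ rather than $i_h$ as written in the paper), and your switch-index treatment of the two-index threshold still yields exactly the endpoint-maximum conclusion that Corollary \ref{c:TargetPebbPath} requires.
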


\begin{proof}
From Fact \ref{f:StackFormula} we obtain the following formula:
\[f_k = \sum_{j=1}^{h}\left(2^{\dist(v_1,v_{i_j})}\right)-1 + \sum_{j=h}^{t}\left(2^{\dist(v_n,v_{i_j})}\right)-1.\]
From this it follows that $f_h - f_{h-1} = 2^{\dist(v_1,v_{i_h})} - 2^{\dist(v_n,v_{i_h})}$, which is nonnegative if and only if $\dist(v_1,v_{i_h})\ge \dist(v_n,v_{i_h})$; i.e., $i_h-1\ge n-i_h$.
\end{proof}

\begin{cor}
\label{c:TargetPebbPath}
Suppose that $D$ is a target on the path $P_n$.
Then $\pi(P_n,D)=f(D,n)$.
\end{cor}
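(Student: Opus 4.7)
The plan is to combine Theorem \ref{t:Path}, which reduces the maximum-size $D$-unsolvable configuration to one supported on the two endpoints $v_1,v_n$, with a direct analysis of when such an endpoint configuration is $D$-solvable, then invoke Lemma \ref{l:Unimodal} to identify the extremal index.

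For the lower bound $\pi(P_n,D)\ge f(D,n)$, I would verify that $F_1$ is $D$-unsolvable (the case of $F_t$ being symmetric). Since $F_1$ places only $2^{i_1-1}-1$ pebbles on $v_1$, one fewer than the $2^{i_1-1}$ required to deliver a single pebble to the nearest target $v_{i_1}$, the $v_1$-pebbles cannot contribute to solving any demand in $\dD$. All demand must therefore be met from $v_n$, whose $\a(v_n,D)-1$ pebbles are one shy of what is required to solve $D$ from $v_n$ alone. Hence $\pi(P_n,D)>f_1$, and symmetrically $\pi(P_n,D)>f_t$, giving $\pi(P_n,D)\ge\max\{f_1,f_t\}+1=f(D,n)$.

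For the upper bound, I would take a $D$-extremal configuration $C$, apply Theorem \ref{t:Path} to assume $\supp(C)\subseteq L(P_n)=\{v_1,v_n\}$, and write $C=\{v_1^a,v_n^b\}$. The key structural claim is that $C$ is $D$-solvable if and only if there exists $k\in\{1,\ldots,t+1\}$ with $a\ge\a(v_1,\{v_{i_1},\ldots,v_{i_{k-1}}\})$ and $b\ge\a(v_n,\{v_{i_k},\ldots,v_{i_t}\})$. The ``if'' direction is a direct construction; for ``only if,'' the No-Cycle Lemma \ref{l:NCL} routes $v_1$-pebbles rightward and $v_n$-pebbles leftward in any minimal solution, inducing a partition of $\dD$ into $D_L$ (solved from $v_1$) and $D_R$ (solved from $v_n$), and an exchange argument (swapping any target in $D_R$ with a target in $D_L$ lying to its right strictly reduces both $\a(v_1,D_L)$ and $\a(v_n,D_R)$) shows that an optimal partition can be taken so that $D_L$ is a left-prefix and $D_R$ is the complementary right-suffix of $\dD$.

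Given this characterization, let $\ell=\max\{k\ge 0:a\ge\a(v_1,\{v_{i_1},\ldots,v_{i_k}\})\}$. Unsolvability of $C$ forces $\ell<t$ and, via the failure of the $(\ell+1)$-partition, $b\le\a(v_n,\{v_{i_{\ell+1}},\ldots,v_{i_t}\})-1$; combined with $a\le\a(v_1,\{v_{i_1},\ldots,v_{i_{\ell+1}}\})-1$, this gives $|C|\le f_{\ell+1}\le\max_k f_k=\max\{f_1,f_t\}$ by Lemma \ref{l:Unimodal}. Equality with the lower bound completes the proof. The main technical obstacle is the exchange argument when $\dD$ contains repeated or high-multiplicity targets, where one must verify that moving a single unit of demand across the split strictly decreases both $\a$-values, so that iterated swaps terminate at a prefix/suffix partition.
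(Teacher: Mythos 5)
Your proposal is correct and follows essentially the same route as the paper's proof: reduce to configurations supported on $\{v_1,v_n\}$ via Theorem \ref{t:Path}, compare against the family $F_j$, and invoke the unimodality of $f_j$ from Lemma \ref{l:Unimodal} to land on $\max\{f_1,f_t\}+1$. Your added details --- the No-Cycle-based prefix/suffix characterization of solvability for endpoint-supported configurations and the explicit check that $F_1$ and $F_t$ are $D$-unsolvable --- simply make rigorous the steps the paper handles implicitly through its maximality assertions.
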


\begin{proof}
Let $D$ be a target with $\dD = \{v_{i_1}, \ldots , v_{i_t} \}$, where $i_1 \leq \dots \leq i_t$.
For each $1 \leq j \leq t$ let $D_L(j) = \{v_{i_1}, ..., v_{i_j} \}$ and $D_R(j) = \{v_{i_j}, ..., v_{i_t} \}$.
For each $1 \leq j \leq t$ let $L_j$ be the maximum $D_L(j)$-unsolvable stack on $v_1$, $R_j$ be the maximum $D_R(j)$-unsolvable stack on $v_n$, and $F_j = L_j + R_j$.
By Fact \ref{f:StackFormula} $|L_j| = \a(v_1,D_L(j)) - 1$ and $|R_j| = \a(v_n,D_R(j)) - 1$.
For each $1 \leq j \leq t$ define $f_j = |F_j|$.
By Lemma \ref{l:Unimodal}, among all of the functions $f_j$, the maximum value is achieved at one of the endpoints, so $|C|=\max_{j=1}^t f_j = \max \{f_1, f_t \}$. 
Suppose that $C$ is a $D$-extremal configuration. 
By Theorem \ref{t:Path} we may assume that $supp(C) \subseteq \{v_1,v_n\}$. 
Let $C_L = C_{v_1}$ and $C_R = C_{v_n}$.
Define $j$ to be the maximum such that $C_L$ solves $D_L(j-1)$ but not $D_L(j)$.
Then, by maximality of $C$, $C_R$ solves $D_R(j+1)$ but not $D_R(j)$.
Also, by maximality of $C$, $C_L = L_j$ and $C_R = R_j$; therefore $C = F_j$.
Since $C$ is $D$-extremal, $C \in \{F_1,F_n\}$. 
Hence, $\pi(P_n,D)=|C|+1=\max \{f_1,f_t\}+1=f(D,n)$.
This concludes the proof.
\end{proof}


\subsection{Trees}
\label{ss:Trees}

\begin{proof}[Proof of Theorem \ref{t:Trees}.]
Let $T$ be a tree with $n$ vertices with $|D|=t$.
If $T$ has $n \leq 3$ or $|L(T)|=2$, then $T$ is a path and the result follows from Theorem \ref{t:Path}. So let assume otherwise, that $n\ge 4$ and $|L(T)|\ge 3$.
Additionally, if $s(D)=1$, then the result follows from Fact \ref{f:Leaves}, so we assume that $s(D)\ge 2$, which implies that $t\ge 2$.

Let $C$ be a $D$-extremal configuration --- we may assume that $C$ has the fewest number of interior pebbles among all such configurations.

Let $P$ be any leaf-split vertex path in $T$, where $x$ is a leaf of $P$ and $y$ be the split vertex (opposite of $x$) in $P$.
Define $P'=P-y$ and $T'=T-P'$.
Since $C$ is extremal, $C$ is $(D-w)$-solvable for any $w\in D$. 
Choose $v \in D$ to be the closest to $x$, let $w \in D-v$, and let $\s$ be a minimum $(C,D-w)$-solution.
We now analyze $\s_v$ and define $C'=C-C[\s]$ and $D'=D-v$.

Consider the case that $\s_v$ comes from the $x$-side of $v$.
If $\s_v$ uses a pebble on an interior vertex $z$, then the configuration that replaces that pebble by a pebble on $x$ is a $D$-extremal configuration with fewer interior pebbles, a contradiction.
Hence, $C[\s]$ is on $x$ already.
Since $\s_v$ is minimum, there are no interior pebbles on $P'$.
Since $C'$ is $D'$-extremal in $T$ and $|D'|<|D|$, induction implies that $\supp(C')\sse L(T)$.
Hence $\supp(C)\sse L(T)$.

Now consider the case that $\s_v$ comes from the $y$-side of $v$.
Let $\t$ be the minimum number of pebbles needed to add to $C'$ at $y$ to solve $D$, and define $D''$ on $T'$ by setting $D''(y)=D(y)+\t$ and $D''(u)=D(u)$ for all $u\in T'-y$.
Then $C'$ is $D''$-extremal.
We proceed by induction on the number of leaves since $|L(T')|<|L(T)|$ because $y$ is a split vertex in $T$.
Hence $\supp(C') \sse L(T)$.
Therefore $\supp(C)\sse L(T)$, which completes the proof.
\end{proof}

Let $\cC$ be the set of all $D$-unsolvable configurations $C$ for which $\supp(C)\sse L(T)$.
For $C\in\cC$ and $v\in\supp(C)$, define the {\it stacked} configuration $C_v$ by $C_v(v)=C(v)$ and $C_v(x)=0$ otherwise, and also define $C_v^+$ by $C_v ^+(v)=C_v(v)+1$ and $C_v^+(x)=0$ otherwise.
Next we say that $v$ is a {\it superstack} if the configuration $C_v^+$ is $D$-solvable.
Now let $C^*_v$ be any $D$-extremal configuration in $\cC$ containing $C_v$ --- which we call a {\it superstack configuration} --- and define $\cC^*$ to be set of $C^*_v$ for all leaves $v$.
Note that when $D=\{r^t\}$ we have $C^*_v=\hC_{r,t}$.

As an aside, it is worth noting that superstacks do not characterize extremal configurations.
Indeed, consider $K_{1,3}$ with leaves $r$, $u$, and $v$, and target $D=r^2$.
Then $C_1=\{u^7,v\}$ and $C_2=\{u^5,v^3\}$ are both $D$-extremal, but only $C_1$ is a superstack. 

Define a $(C,D)$-solution $\s$ to be {\it merging} if there are distinct vertices $u$, $v$, and $w$ such that $\s$ contains the pebbling steps $u\mapsto w$ and $v\mapsto w$; otherwise, it is {\it merge-free}.
Note that if $\s$ is a merge-free solution of a single target then $T_\s$ is a path.
Such a vertex $w$ is called a {\it merging} vertex of $\s$; observe that a merging vertex of $\s$ is identified by having $\deg_{T_\s}^-(v)\ge 2$.
For a $(C,D)$-solution $\s$, we define its {\it merge number} $\mu(\s)=\sum_v (\deg_{T_\s}^-(v)-1)$ over all its merging vertices $v$.
For a $D$-solvable configuration $C$, we define its {\it merge number} $\mu(C)=\min_\s\mu(\s)$ over all $(C,D)$-solutions $\s$.

For a $(C,D)$-solution $\s$, we define the {\it source of $\s$} to be $\So(\s)=\{v\mid v\in\supp(C[\s])\}$.
If $\So(\s)=\{w\}$ then we say that $w$ is the source of $\s$.

We record the following evident fact without proof.

\begin{fct}
\label{f:Lex}
Let $a=\{a_1,\ldots,a_m\}$ be a multiset of nonnegative integers, written so that $a_1\ge\cdots\ge a_m$.
Let $1\le j<j'\le m$ and $c\le a_{j'}$, and define $a'=\{a'_1,\ldots,a'_m\}$ by $a'_{j'}=a_{j'}-c$, $a'_j=a_j+c$, and $a'_i=a_i$ otherwise.
Then $a'\succ a$.
\qed
\end{fct}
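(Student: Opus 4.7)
The plan is to compare the sorted-decreasing versions of $a$ and $a'$ and locate the first coordinate at which they disagree. Set $M = a_j + c$ and $m = a_{j'} - c$. Since $a$ is listed in decreasing order and $j < j'$, we have $a_j \ge a_{j'}$; and for the conclusion $a' \succ a$ to be meaningful we need $a' \ne a$, i.e.\ $c \ge 1$, so $M > a_j \ge a_{j'} > m$.

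The key step is to count, in each multiset, the number of entries of value at least $M$. Let $N_> = |\{i : a_i > M\}|$ and $N_= = |\{i : a_i = M\}|$, so that the $\ge M$ entries of $a$ occupy positions $1,\ldots,N$ of its sorted sequence, where $N = N_> + N_=$. Passing from $a$ to $a'$ removes $a_j$ and $a_{j'}$ (both $<M$) and inserts $M$ and $m$ (with $m<M$); entries strictly greater than $M$ are therefore unchanged, while entries equal to $M$ increase by exactly one, giving $N+1$ entries $\ge M$ in $a'$.

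Now sort both multisets as $\alpha_1 \ge \cdots \ge \alpha_m$ and $\beta_1 \ge \cdots \ge \beta_m$. Then $\alpha_i = \beta_i$ for $1 \le i \le N_>$ (the entries exceeding $M$ match), and $\alpha_i = \beta_i = M$ for $N_> < i \le N$ (both sequences list the shared copies of $M$). At position $N+1$ we have $\alpha_{N+1} < M$ by the choice of $N$, but $\beta_{N+1} = M$ because $a'$ carries one additional copy of $M$. This is exactly the witness position demanded by the definition of $\succ$ in Subsection \ref{ss:graphdefs}, so $a' \succ a$. The only delicate bookkeeping is handling ties at value $M$, but the $\ge M$ count sidesteps them cleanly, so I expect the argument to be short and essentially a tally.
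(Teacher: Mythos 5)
Your argument is correct, and in fact the paper offers no proof at all to compare it with: Fact~\ref{f:Lex} is recorded there as ``evident'' and left unproved, so your write-up simply fills that gap. The threshold count at the value $M=a_j+c$ is sound: the entries strictly above $M$ form the same multiset in $a$ and $a'$, the number of copies of $M$ rises by exactly one (since $a_j<M$ and $a_{j'}\le a_j<M$, while $a_{j'}-c<M$), so the sorted sequences agree through position $N=N_>+N_=$ and differ first at position $N+1$, where $a'$ shows $M$ and $a$ shows something smaller; position $N+1$ exists because $a_j$ itself is an entry of $a$ below $M$. This is exactly the witness required by the definition of $\succ$ in Subsection~\ref{ss:graphdefs}. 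Two small remarks: your observation that one needs $c\ge 1$ (the statement as printed would fail for $c=0$, when $a'=a$) is consistent with the only place the fact is used, the proof of Lemma~\ref{l:NoMerge}, where $c=k2^{d_x}\ge 2$; and you overload the symbol $m$, using it both for the size of the multiset and for $a_{j'}-c$, which you should rename to avoid confusion.
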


We will apply this notion to the multiset of values of a configuration $C$: i.e., $\{C(v)\}_{v\in V}$.

\begin{lem}[No-Merging Lemma]
\label{l:NoMerge}
Let $D$ be a target in a tree $T$.
Then there is a $D$-extremal configuration $C$, and a leaf with $C(v)=\max C$, such that any $(C+v,D)$-solution $\s$ is non-merging.
\end{lem}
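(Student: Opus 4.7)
The plan is a majorization argument. By Theorem~\ref{t:Trees} the family $\mathcal{E}$ of $D$-extremal configurations supported on $L(T)$ is nonempty, and each $C\in\mathcal{E}$ has the same size $\pi(T,D)-1$. I would first select $C\in\mathcal{E}$ whose multiset of leaf values $\{C(\ell)\}_{\ell\in L(T)}$ is maximum in the majorization order $\succ$; such a maximum exists because the total size is fixed and only finitely many such multisets occur. Let $v$ be any leaf with $C(v)=\max C$, and suppose for contradiction that every $(C+v,D)$-solution is merging.

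Among such solutions choose $\sigma$ minimizing the merge number $\mu(\sigma)$, and fix a merging vertex $w$ of $\sigma$ with two incoming arcs $u_1w, u_2w$ in $T_\sigma$. Since $T$ is a tree, deleting $w$ splits $V(T)-w$ into subtrees $T_1,T_2,\ldots$ containing $u_1,u_2,\ldots$, respectively. The No-Cycle Lemma~\ref{l:NCL} applied to the minimized $\sigma$ shows that the pebbles crossing arc $u_iw$ descend from a set of source leaves $S_i\subseteq L(T)\cap V(T_i)$, and these sets are pairwise disjoint. At least one side, say $S_2$, does not contain $v$; pick any $\ell\in S_2$.

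Next I would construct $C'\in\mathcal{E}$ whose multiset strictly majorizes that of $C$. The natural candidate is $C'=C-\ell^k+v^k$ for some $k\ge 1$; since $C(v)\ge C(\ell)\ge k$, Fact~\ref{f:Lex} guarantees $\{C'(\ell')\}_{\ell'}\succ\{C(\ell')\}_{\ell'}$. What remains is to verify that $C'$ is $D$-unsolvable (then, by $|C'|=|C|$, automatically $D$-extremal). Assume to the contrary that $\sigma'$ solves $(C',D)$; the enlarged stack at $v$ must play a role analogous to $\ell$'s role in $\sigma$, and I would splice $\sigma'$ with the portion of $\sigma$ routed along $u_2\mapsto w$ (re-routed through $v$'s side of $w$) to produce a $(C+v,D)$-solution of strictly smaller merge number than $\sigma$, contradicting minimality of $\mu(\sigma)$.

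The hard part will be this final splicing: I must locate non-merging routes that consume exactly the pebbles freed by relocating $\ell^k$ to $v^k$ and deliver them to the targets formerly served via the merge at $w$. When $\sigma$ carries pebbles across $w$ in both directions (targets on both sides of $w$), my plan is to decompose $\sigma$ into per-target subsolutions $\sigma_x$ (one $x\in\dD$ at a time) and apply the surgery only to the offending $\sigma_x$, using distances and binary powers to balance the pebble bookkeeping. Iterating this local improvement strictly advances the leaf-value multiset under $\succ$, and because $\succ$ has no infinite ascending chains on multisets of fixed total, the process terminates in the required pair $(C,v)$.
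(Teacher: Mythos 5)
Your overall strategy is the same as the paper's: pick a $D$-extremal configuration $C$ supported on $L(T)$ that is maximum in the majorization order, pick $v$ with $C(v)=\max C$, and, given a merging solution of $C+v$, transfer pebbles between leaf stacks so that Fact~\ref{f:Lex} yields a $D$-extremal configuration majorizing $C$, a contradiction. But the decisive step --- showing the transferred configuration is still $D$-unsolvable --- is exactly what you defer (``the hard part will be this final splicing''), and as sketched it does not go through. Your candidate $C'=C-\ell^k+v^k$ has size $|C|=\pi(T,D)-1$, and your plan to refute a hypothetical $(C',D)$-solution $\s'$ is to splice it with parts of $\s$ into a $(C+v,D)$-solution of smaller merge number. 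Note, however, that relative to $C'$ the configuration $C+v$ is short by $k-1$ pebbles at $v$ and has $k$ extra pebbles at the (possibly distant) leaf $\ell$; a pebble at $\ell$ cannot in general substitute for a pebble at $v$, so there is no evident way to simulate $\s'$ from $C+v$, and no mechanism is given for why the result would have fewer merges. Also, $k$ is never specified, and the choice of destination $v$ (the global maximum stack) gives you no structural control over what the moved pebbles can deliver to the merge vertex $w$.

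The paper avoids both difficulties by localizing the transfer at a \emph{lowest} merging vertex $w$ of a merge-minimizing solution $\s$ of $C+v$ (so that $\So(\s_w)=\{x,y\}$ consists of two leaves, with $C(x)\le C(y)$), and by transferring, within $C+v$ rather than within $C$, exactly the $k2^{d_x}$ pebbles that $x$ feeds to $w$ over to the other source $y$. The resulting $C'$ has size $\pi(T,D)$, hence is $D$-solvable for free; the unsolvability claim is then made for $C'-v$, using the fact that the relocated pebbles can contribute only about the same $k$ pebbles to $w$ as before, so $C'-v$ is a $D$-extremal configuration with $C'-v\succ C$ by Fact~\ref{f:Lex}, contradicting the choice of $C$. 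To repair your write-up you would need either to adopt this ``transfer between the two sources of a lowest merge vertex'' device (with the explicit amount $k2^{d_x}$), or to supply a genuinely different argument for the unsolvability of your $C'$; the splicing contradiction as currently described is the missing idea, not a routine verification.
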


Observe that paths have these properties by Corollary \ref{c:TargetPebbPath}.

\begin{figure}[ht]
    \centering  
  {
  \begin{tikzpicture}[
  every node/.style={draw, circle, fill=black, minimum size=1.5mm, inner sep=0pt},
  highlight/.style={line width=4pt, draw=yellow!50, line cap=round},
  highlight1/.style={line width=7pt, draw=blue!50, line cap=round},
  highlight2/.style={line width=4pt, draw=gren!50, line cap=round}
  ]
    
  \node (A) [label={[label distance=1mm]90:{\color{red} $5$}}] at (0,0) {}; 
  \node (B) at (-1.41,-1) {}; 
  \node (BB) [label={[label distance=1mm]180:{\color{black} $j$}}] at (1.41,-1) {}; 
  \node (C)  at (-2,-2) {}; 
  \node (CC) [label={[label distance=1mm]270:{\color{gren} $1$}}] at (-.82,-2) {}; 
  \node (CCC) [label={[label distance=1mm]180:{\color{black} $w$}}] at (.82,-2) {}; 
  \node (CCCC) at (2.13,-2) {}; 
  \node (D) [label={[label distance=1mm]270:{\color{gren} $7$}}] at (-2.59,-3) {}; 
  \node (DD) at (0,-3) {}; 
  \node (DDD) [label={[label distance=1mm]270:{\color{gren} $1$}}] at (.82,-3) {}; 
  \node (DDDD) at (1.64,-3) {};
 \node (DDDDD) [label={[label distance=.5mm]270:{\color{gren} $15$}}] [label={[label distance=1mm]180:{\color{black} $z$}}]at (2.82,-3) {}; 
  \node (E) [label={[label distance=.5mm]270:{\color{gren} $11$}}] [label={[label distance=1mm]180:{\color{black} $x$}}] at (-.59,-4) {}; 
  \node (EE) [label={[label distance=1mm]270:{\color{gren} $1$}}] at (1.05,-4) {};
  \node (EEE) [label={[label distance=.5mm]270:{\color{gren} $47$}}] [label={[label distance=1mm]180:{\color{black} $y$}}] at (2.23,-4) {};
  \draw[thick] (A) -- (B) -- (C) -- (D) ;
  \draw[thick] (B) -- (CC) ;
  \draw[thick] (A) -- (BB)-- (CCC) -- (DD) -- (E);
  \draw[thick] (CCC) -- (DD) -- (E);
  \draw[thick] (CCC) -- (DDD);
  \draw[thick] (CCC) -- (DDDD) -- (EEE);
  \draw[thick] (DDDD) -- (EE);
   \draw[thick] (BB)-- (CCCC) -- (DDDDD);
  \end{tikzpicture}
}
$\qquad$
  {\begin{tikzpicture}[
  every node/.style={draw, circle, fill=black, minimum size=1.5mm, inner sep=0pt},
  highlight/.style={line width=4pt, draw=yellow!50, line cap=round},
  highlight1/.style={line width=7pt, draw=blue!50, line cap=round},
  highlight2/.style={line width=4pt, draw=gren!50, line cap=round}
  ]
    
  \node (A) [label={[label distance=1mm]90:{\color{red} $5$}}] at (0,0) {}; 
  \node (B) at (-1.41,-1) {}; 
  \node (BB) [label={[label distance=1mm]180:{\color{black} $j$}}] at (1.41,-1) {}; 
  \node (C)  at (-2,-2) {}; 
  \node (CC) [label={[label distance=1mm]270:{\color{gren} $1$}}] at (-.82,-2) {}; 
  \node (CCC) [label={[label distance=1mm]180:{\color{black} $w$}}] at (.82,-2) {}; 
  \node (CCCC) at (2.13,-2) {}; 
  \node (D) [label={[label distance=1mm]270:{\color{gren} $7$}}] at (-2.59,-3) {}; 
  \node (DD) at (0,-3) {}; 
  \node (DDD) [label={[label distance=1mm]270:{\color{gren} $1$}}] at (.82,-3) {}; 
  \node (DDDD) at (1.64,-3) {};
 \node (DDDDD) [label={[label distance=.5mm]270:{\color{gren} $15$}}] [label={[label distance=-6mm]180:{\color{black} $z$}}]at (2.82,-3) {}; 
  \node (E) [label={[label distance=1mm]270:{\color{gren} $3$}}] [label={[label distance=1mm]180:{\color{black} $x$}}] at (-.59,-4) {}; 
  \node (EE) [label={[label distance=1mm]270:{\color{gren} $1$}}] at (1.05,-4) {};
  \node (EEE) [label={[label distance=.5mm]270:{\color{gren} $55$}}] [label={[label distance=1mm]180:{\color{black} $y$}}] at (2.23,-4) {};
  \draw[thick] (A) -- (B) -- (C) -- (D) ;
  \draw[thick] (B) -- (CC) ;
  \draw[thick] (A) -- (BB)-- (CCC) -- (DD) -- (E);
  \draw[thick] (CCC) -- (DD) -- (E);
  \draw[thick] (CCC) -- (DDD);
  \draw[thick] (CCC) -- (DDDD) -- (EEE);
  \draw[thick] (DDDD) -- (EE);
   \draw[thick] (BB)-- (CCCC) -- (DDDDD);
\end{tikzpicture}

}
    \caption{\rev{A tree $T$ with target $D$ (in {\color{red} red}) and $D$-extremal configurations (in {\color{gren} green}) $C$ (on the left) and $C'$ (on the right), illustrating Lemma \ref{l:NoMerge}.
    Both $w$ and $j$ are merging vertices of the $(C+y,D)$-solution $\s$, with $w$ on the $jy$-path of $T_\s$.
    Then the argument in the proof of Lemma \ref{l:NoMerge} converts $C$ to $C'$ by moving $2(2^2)=8$ pebbles from $x$ to $y$.}}
    \label{fig:MergeConversion}
\end{figure}

\begin{proof}
When $s(D)=1$ Lemma \ref{l:NoMerge} follows from Theorem~\ref{t:Chung}, and so we may assume that $s(D) \geq 2$.

Let $C$ be a $D$-extremal configuration on a tree $T$ with $C\succ C'$ for all $D$-extremal configurations $C'$.
By Theorem \ref{t:Trees} $\supp(C) \subseteq L(T)$.
Choose any $v$ with $C(v)=\max C$.
If $\mu(C+v)=0$ we are done, so we suppose that $\mu(C+v)>0$ and let $\s$ be a $(C+v, D)$-solution, where $\mu(C+v)=\mu(\s)$.
Choose a merging vertex $w$, such that if $u$ is a leaf in $T_{\s}$, then $w$ is the only merge vertex on the $uw$-path of $T_{\s}$.
Then $\So(\s_w)=\{x,y\}$.
Label $x$ and $y$ so that $C(x)\le C(y)$.
(See Figure \ref{fig:MergeConversion}.)

Suppose that $\s$ moves exactly $k$ pebbles from $x$ onto $w$. 
Then $2 \leq k2^{d_x} \leq (C+v)(x) < (k+1)2^{d_x}$.
Define the configuration $C'$ by $C'(x)=(C+v)(x)-k2^{d_x}$, $C'(y)=(C+v)(y)+k2^{d_x}$, and $C'(u)=(C+v)(u)$ for all other $u$.
If $C'$ is not $D$-solvable, that contradicts $C$ being $D$-extremal, and so $C'$ is $D$-solvable, so we assume that $C'$ is $D$-solvable.
Because $C'$ can only contribute an additional $k$ pebbles to $w$, $C'-v$ is $D$-unsolvable.
Also, $|C'-v|=|C|$, and so $C'-v$ is $D$-extremal.
However, $C'-v\succ C$ by Fact \ref{f:Lex}, which is a contradiction.


Therefore, $\mu(C+v)=0$, which completes the proof.
\end{proof}

For vertices $a,b\in T$ denote by $T_{ab}$ the unique $ab$-path in $T$.
Define a {\it bush} to be an orientation of the edges of a tree with every vertex of indegree at most 1 and a unique vertex $u$ of indegree 0.
The vertex of indegree 0 is called the {\it seed}.

Thus we may denote the bush with seed $a$ by $B_a$.

\begin{cor}
\label{c:SingleSource}
Let $D$ be a target in a tree $T$.
Then there is a $D$-extremal configuration $C$, and a leaf $v$ with $C(v)=\max C$, such that the unique $(C+v,D)$-solution $\s$ has $|\So(\s)|=1$.
\end{cor}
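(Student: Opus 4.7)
The plan is to upgrade Lemma~\ref{l:NoMerge} to a single-source statement via a lex-maximality argument. I would start by choosing $C$ to be a $D$-extremal configuration that is maximum in the $\succ$-ordering on its multiset of pebble counts; by Theorem~\ref{t:Trees} we have $\supp(C)\sse L(T)$, and the argument in the proof of Lemma~\ref{l:NoMerge} shows that for this lex-maximum $C$ and any leaf $v$ with $C(v)=\max C$, every $(C+v,D)$-solution is non-merging. Fix such a $v$ and let $\s$ be a minimum $(C+v,D)$-solution.

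I would first observe that $v\in\So(\s)$: otherwise $\s$ uses none of the added pebble at $v$ and so is itself a $(C,D)$-solution, contradicting $D$-unsolvability of $C$. Now suppose for contradiction that some $u\in\So(\s)\setminus\{v\}$ exists; then $u$ is a leaf by Theorem~\ref{t:Trees}. On the unique $uv$-path $P=u=p_0,p_1,\ldots,p_l=v$, both endpoints are sources in a non-merging solution, so their outflows in $T_\s$ cannot collide; hence some edge $e=p_ip_{i+1}$ of $P$ lies outside $T_\s$. Removing $e$ partitions $T$ into subtrees $T_u\ni u$ and $T_v\ni v$, and because $\s$ performs no step across $e$, the restriction $\s|_{T_u}$ is a $(C|_{T_u},D|_{T_u})$-solution on $T_u$ that genuinely uses $u$'s pebbles (since by minimality of $\s$, $\s|_{T_u}$ is itself a minimum $D|_{T_u}$-solution on $T_u$).

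I would then construct the candidate lex-larger configuration $C'$ by shifting all of $u$'s pebbles onto $v$: $C'(u)=0$, $C'(v)=C(v)+C(u)$, and $C'(w)=C(w)$ for every other $w$. Then $|C'|=|C|$, $\supp(C')\sse L(T)$, and since $C(u)\le C(v)=\max C$, Fact~\ref{f:Lex} gives $C'\succ C$. The critical step, and the main obstacle of the proof, is to establish that $C'$ is still $D$-unsolvable; this would produce a $D$-extremal configuration lex-larger than $C$, contradicting the lex-maximal choice of $C$ and forcing $|\So(\s)|=1$. The plan for the unsolvability is a cost-accounting argument across the gap edge $e$: any hypothetical $(C',D)$-solution must compensate for the loss of $C(u)$ pebbles at $u$ by routing pebbles from $v$ across $e$ into $T_u$, but each pebble delivered from $v$ to a vertex $x\in T_u$ requires $2^{\dist(v,x)}\ge 2^{l-i}$ pebbles at $v$; using that $\s|_{T_u}$ is a minimum $D|_{T_u}$-solution genuinely requiring $u$'s $C(u)$ pebbles, the $C(u)$ additional pebbles at $v$ in $C'$ are too few (by an exponential factor) to sustain this transport, so $C'$ cannot solve $D$.

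Finally, once $|\So(\s)|=1$ is established, uniqueness of $\s$ is immediate: the single source must be $v$, and since in the tree $T$ each target $t\in\supp(D)$ is reachable from $v$ only via the unique $vt$-path, the number of pebbles $\s$ transports along each edge is forced by $D(t)$, $C(t)$, and $\dist(v,t)$, determining the entire multiset of pebbling steps. Thus $\s$ is the unique $(C+v,D)$-solution and it satisfies $|\So(\s)|=1$.
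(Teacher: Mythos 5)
Your plan has a genuine gap at exactly the step you flag as critical: the $D$-unsolvability of the shifted configuration $C'$. The sketched cost-accounting does not establish it. First, a hypothetical $(C',D)$-solution is under no obligation to route pebbles from $v$ across the gap edge $e$ into $T_u$: the fact that your one chosen minimal solution $\s$ drew on $u$'s stack does not show that $D|_{T_u}$ is unsolvable from $C|_{T_u}-u^{C(u)}$, so it may happen that the $T_u$-side demands are met without $u$'s pebbles while the $C(u)$ extra pebbles now sitting on $v$ unlock the $T_v$-side, making $C'$ solvable and killing the contradiction. Second, even for pebbles that do cross $e$, the clean ``$2^{\dist(v,x)}$ per delivered pebble'' bookkeeping fails because a $(C',D)$-solution need not be merge-free (the No-Merging Lemma \ref{l:NoMerge} was established only for the lex-maximal $C$ plus one pebble, not for $C'$), so pebbles from $v$ may combine with pebbles already in $T_u$ at intermediate vertices. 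Note also that the paper's exchange in the proof of Lemma \ref{l:NoMerge} moves a carefully calibrated amount $k2^{d_x}$ (chosen so the contribution at the merge vertex is exactly preserved), whereas you move the entire stack $C(u)$; that calibration is what makes the solvability/unsolvability bookkeeping there go through, and nothing analogous is available for a whole-stack shift.

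The paper takes a different route that sidesteps this difficulty: starting from the merge-free solution of Lemma \ref{l:NoMerge}, it views $T_\s$ as a disjoint union of bushes, chooses a bush $B_u$ and a $\prec$-minimal bush $B_v$, and projects the demands of the off-path bushes onto the path $T_{uv}$ as a weighted target $D'$ (placing $\a_{B_z}(z,D_z)$ at each seed $z$). A configuration supported on $\{u,v\}$ solves the original demands on $B_u\cup B_v$ iff it solves $D'$ on $T_{uv}$, so a two-source solution would contradict the already-proven path results (the convexity computation in Theorem \ref{t:Path}, via Lemma \ref{l:Unimodal} and Corollary \ref{c:TargetPebbPath}), where the ``one big stack beats two stacks'' comparison is carried out rigorously. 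In effect, the inequality you would need to prove for $C'$ is precisely the content of that path convexity argument; your proposal gestures at it but does not import or reprove it, so as written the argument is incomplete. (Your lex-maximal setup, the use of Theorem \ref{t:Trees} and Fact \ref{f:Lex}, and the observation that some edge of the $uv$-path is unused by a merge-free minimal solution are all fine, though the last point needs the No-Cycle Lemma to rule out an arc into a leaf source.)
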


Observe that paths have this property by Corollary \ref{c:TargetPebbPath}.

\begin{proof}
The No-Merging Lemma \ref{l:NoMerge} implies that, for $v\in L(T)$, with $C(v)=\max C$ the unique $(C+v,D)$-solution $\s$ is a pairwise disjoint
union of bushes, in which
$\So(\s)$ is the set of its seeds, necessarily containing $v$.
Thus, if $s(D)=1$ then $|\So(\s)|=1$.
Hence we may assume that $s(D)\ge 2$.
We derive the contradiction that if $\s$ contains more than one bush then $C$ is not $D$-extremal.

Let $B_u$ be any bush and let $\cB=\{B_x\}$ be the set of all other bushes.
For bushes $B_x$ and $B_y$ in $\cB$ we write $B_y\prec B_x$ if $u$ and $y$ are in different components of $T-B_x$.
Because $T$ is a tree, the relation $\prec$ is transitive, and since $\cB$ is finite, there exists a minimal bush $B_v\in\cB$.
(See Figure \ref{fig:bush1}.)

\begin{figure}[ht]


\begin{subfigure}{\textwidth}
\begin{center}
\begin{tikzpicture}[
  every node/.style={draw, circle, fill=black, minimum size=1.5mm, inner sep=0pt},
  highlight/.style={line width=4pt, draw=yellow!50, line cap=round},
  highlight1/.style={line width=7pt, draw=blue!50, line cap=round},
  highlight2/.style={line width=4pt, draw=gren!50, line cap=round}
  ]
    
  \node (A) [label={[label distance=-5mm]360:{\color{black} $u$}}] at (0,0) {}; 
  \node (B) at (1,0) {}; 
  \node (BB) at (1,-1) {}; 
  \node (C) [label={[label distance=-6mm]90:{\color{red} $1$}}] at (2,0) {}; 
  \node (D) at (3,0) {}; 
  \node (E) [label={[label distance=-6mm]90:{\color{red} $2$}}] at (4,0) {}; 
  \node (F) at (5,0) {}; 
  \node (G) at (6,0) {}; 
  \node (H) [label={[label distance=-7mm]135:{\color{red} $1$}}] at (7,0) {}; 
  \node (I) at (8,0) {};
  \node (J) [label={[label distance=-5mm]180:{\color{black} $x$}}] at (9,0) {}; 
  \node (K) at (3,-1) {}; 
  \node (KK) [label={[label distance=-6mm]90:{\color{red} $1$}}] at (3,-2) {}; 
  \node (L) at (4.5,-1) {};
  \node (LL) at (4.5,-2) {};
  \node (LLa) at (4,-3) {};
  \node (LLb) [label={[label distance=-6mm]90:{\color{black} $w$}}] at (5,-3) {};
  \node (M) [label={[label distance=-6mm]90:{\color{red} $1$}}] at (5.5,-1) {}; 
  \node (N) at (7,-1) {}; 
  \node (O) [label={[label distance=-6mm]90:{\color{red} $3$}}] at (6.5,-2) {}; 
  \node (P) at (7.5,-2) {};
  \node (Q) [label={[label distance=-6mm]90:{\color{red} $1$}}] at (6.8,-3) {}; 
  \node (R) [label={[label distance=-6mm]90:{\color{red} $2$}}] at (7.5,-3) {};
  \node (S) [label={[label distance=-6mm]90:{\color{black} $v$}}] at (8.2,-3) {};
  
  \draw[line width=1.5pt,blue] (A) -- (B) -- (C) -- (D) -- (E);
  \draw[thick] (E) -- (F) -- (G) -- (H);
  \draw[thick] (B) -- (BB);
  \draw[line width=1.5pt,gren] (H) -- (I) -- (J);
  
  \draw[line width=1.5pt,blue] (D) -- (K) -- (KK);
  
  \draw[line width=1.5pt,brwn] (F) -- (L);
  \draw[line width=1.5pt,brwn] (F) -- (M);
  
  \draw[line width=1.5pt,brwn] (L) -- (LL);
  
  \draw[thick] (LL) -- (LLa);
  \draw[line width=1.5pt,brwn] (LL) -- (LLb);

  \draw[line width=1.5pt,gren] (H) -- (N);
  
  \draw[line width=1.5pt,gren] (N) -- (O);
  \draw[line width=1.5pt,gren] (N) -- (P);

  \draw[line width=1.5pt,gren] (P) -- (Q);
  \draw[line width=1.5pt,orange] (P) -- (R);
  \draw[line width=1.5pt,orange] (P) -- (S);

\end{tikzpicture}

\end{center}
\end{subfigure}
\medskip 


\begin{subfigure}{\textwidth}
\begin{center}
\begin{tikzpicture}[
  every node/.style={draw, circle, fill=black, minimum size=1.5mm, inner sep=0pt},
  highlight/.style={line width=4pt, draw=yellow!50, line cap=round},
  highlight1/.style={line width=7pt, draw=blue!50, line cap=round},
  highlight2/.style={line width=4pt, draw=gren!50, line cap=round}
  ]
    
  \node (A) [label={[label distance=-5mm]360:{\color{black} $u$}}] at (0,0) {}; 
  \node (B) at (1,0) {}; 
  \node (BB) at (1,-1) {}; 
  \node (C) [label={[label distance=-6mm]90:{\color{red} $1$}}] at (2,0) {}; 
  \node (D) at (3,0) {}; 
  \node (E) [label={[label distance=-6mm]90:{\color{red} $2$}}] at (4,0) {}; 
  \node (F) at (5,0) {}; 
  \node (G) at (6,0) {}; 
  \node (H) [label={[label distance=-7mm]135:{\color{red} $1$}}] at (7,0) {}; 
  \node (I) at (7,-1) {};
  \node (J) [label={[label distance=-6mm]90:{\color{black} $x$}}] at (7,-2) {}; 
  \node (K) at (3,-1) {}; 
  \node (KK) [label={[label distance=-6mm]90:{\color{red} $1$}}] at (3,-2) {}; 
  \node (L) at (4.5,-1) {};
  \node (LL) at (4.5,-2) {};
  \node (LLa) at (4,-3) {};
  \node (LLb) [label={[label distance=-6mm]90:{\color{black} $w$}}] at (5,-3) {};
  \node (M) [label={[label distance=-6mm]90:{\color{red} $1$}}] at (5.5,-1) {}; 
  \node (N) at (8,0) {}; 
  \node (O) [label={[label distance=-6mm]90:{\color{red} $3$}}] at (8,-1) {}; 
  \node (P) at (9,0) {};
  \node (Q) [label={[label distance=-6mm]90:{\color{red} $1$}}] at (8.7,-1) {}; 
  \node (R) [label={[label distance=-6mm]90:{\color{red} $2$}}] at (9.3,-1) {};
  \node (S) [label={[label distance=-5mm]180:{\color{black} $v$}}] at (10,0) {};
  
  \draw[line width=1.5pt,blue] (A) -- (B) -- (C) -- (D) -- (E);
  \draw[thick] (E) -- (F) -- (G) -- (H);
  \draw[thick] (B) -- (BB);
  \draw[line width=1.5pt,gren] (H) -- (I) -- (J);
  
  \draw[line width=1.5pt,blue] (D) -- (K) -- (KK);
  
  \draw[line width=1.5pt,brwn] (F) -- (L);
  \draw[line width=1.5pt,brwn] (F) -- (M);
  
  \draw[line width=1.5pt,brwn] (L) -- (LL);
  
  \draw[thick] (LL) -- (LLa);
  \draw[line width=1.5pt,brwn] (LL) -- (LLb);

  \draw[line width=1.5pt,gren] (H) -- (N);
  
  \draw[line width=1.5pt,gren] (N) -- (O);
  \draw[line width=1.5pt,gren] (N) -- (P);

  \draw[line width=1.5pt,gren] (P) -- (Q);
  \draw[line width=1.5pt,orange] (P) -- (R);
  \draw[line width=1.5pt,orange] (P) -- (S);

\end{tikzpicture}

\end{center}
\end{subfigure}
 \caption{A tree $T$ with bushes $B_u$ (in {\color{blue} blue}), $B_x$ (in {\color{gren} green}), $B_w$ (in {\color{brwn} brown}), and $B_v$ (in {\color{orange} orange}), above, and 
 the tree $T$, redrawn with choice of $\prec$-minimal bush $B_v$, below.}
\label{fig:bush1}
\end{figure}

Now define $T'= B_u\cup B_v\cup T_{uv}$.
Note that $T'-E(T_{uv})$ is a disjoint union of bushes $B_z$, each with its seed $z\in V(T_{uv})$.
Now define the pebbling function $D'$ on $T_{uv}$ by $D'(z)=\a_{B_z}(z,D_z)$, where $D_z=D\cap B_z$.
(See Figure \ref{fig:bush2}.)

\begin{figure}[ht]


\begin{subfigure}{\textwidth}
\begin{center}
\begin{tikzpicture}[
  every node/.style={draw, circle, fill=black, minimum size=1.5mm, inner sep=0pt},
  highlight/.style={line width=4pt, draw=yellow!50, line cap=round},
  highlight1/.style={line width=7pt, draw=blue!50, line cap=round},
  highlight2/.style={line width=4pt, draw=gren!50, line cap=round}
  ]
    
  \node (A) [label={[label distance=-5mm]360:{\color{black} $u$}}] at (0,0) {}; 
  \node (B) at (1,0) {}; 
  \node (C) at (2,0) {}; 
  \node (D) [label=above:{\color{blue} $z_u$}] at (3,0) {}; 
  \node (E) at (4,0) {}; 
  \node (F) at (5,0) {}; 
  \node (G) at (6,0) {}; 
  \node (H) at (7,0) {}; 
  \node (K) at (3,-1) {}; 
  \node (KK) [label={[label distance=-6mm]90:{\color{red} $1$}}] at (3,-2) {}; 
  \node (N) at (8,0) {}; 
  \node (P) [label=above:{\color{orange} $z_v$}] at (9,0) {};
  \node (R) [label={[label distance=-6mm]90:{\color{red} $2$}}] at (9.3,-1) {};
  \node (S) [label={[label distance=-5mm]180:{\color{black} $v$}}] at (10,0) {};
  
  \draw[thick] (A) -- (B) -- (C) -- (D) -- (E);
  \draw[thick] (E) -- (F) -- (G) -- (H);
  
  \draw[line width=1.5pt,blue] (D) -- (K) -- (KK);
  



  \draw[thick] (H) -- (N);
  
  \draw[thick] (N) -- (P);

  \draw[line width=1.5pt,orange] (P) -- (R);
  \draw[thick] (P) -- (S);

\end{tikzpicture}

\end{center}
\end{subfigure}
\medskip 


\begin{subfigure}{\textwidth}
\begin{center}
\begin{tikzpicture}[
  every node/.style={draw, circle, fill=black, minimum size=1.5mm, inner sep=0pt},
  highlight/.style={line width=4pt, draw=yellow!50, line cap=round},
  highlight1/.style={line width=7pt, draw=blue!50, line cap=round},
  highlight2/.style={line width=4pt, draw=gren!50, line cap=round}
  ]
    
  \node (A) [label={[label distance=-5mm]360:{\color{black} $u$}}] at (0,0) {}; 
  \node (B) at (1,0) {}; 
  \node (C) [label={[label distance=-6mm]90:{\color{red} $1$}}] at (2,0) {}; 
  \node (D) [label={[label distance=-6mm]90:{\color{red} $4$}}] at (3,0) {}; 
  \node (E) [label={[label distance=-6mm]90:{\color{red} $2$}}] at (4,0) {}; 
  \node (F) at (5,0) {}; 
  \node (G) at (6,0) {}; 
  \node (H) at (7,0) {}; 
  \node (N) at (8,0) {}; 
  \node (P) [label={[label distance=-6mm]90:{\color{red} $4$}}] at (9,0) {};
  \node (S) [label={[label distance=-5mm]180:{\color{black} $v$}}] at (10,0) {};
  
  \draw[thick] (A) -- (B) -- (C) -- (D) -- (E);
  \draw[thick] (E) -- (F) -- (G) -- (H);
  
  



  \draw[thick] (H) -- (N);
  
  \draw[thick] (N) -- (P);

  \draw[thick] (P) -- (S);

\end{tikzpicture}

\end{center}
\end{subfigure}
 \caption{The tree $T'=B_u\cup B_v\cup T_{uv}$ (derived from Figure \ref{fig:bush1}), showing bushes $B_{z_u}$ (in {\color{blue} blue}) and $B_{z_v}$ (in {\color{orange} orange}), with corresponding seeds $z_u$ and $z_v$, above, and the pebbling function $D'$ on $T_{uv}$.}
\label{fig:bush2}
\end{figure}

Clearly, a configuration with support in $\{u,v\}$ solves $D_u\cup D_v$ on $B_u\cup B_v$ if and only if it solves $D'$ on $T_{uv}$.
Let $\s'$ be the $(C+v,D')$-solution on $T_{uv}$ induced by $\s$.
Because $C$ is $D$-extremal on $T$, it is also $D'$-extremal on $T_{uv}$.
Therefore, by Lemma \ref{l:Unimodal} and Corollary \ref{c:TargetPebbPath} we know that a pebble added to a $D'$-extremal configuration on a path consists of a single bush, contradicting that $\s'$ consists of two bushes.
\end{proof}

\begin{lem}
\label{l:SuperStack}
If $D$ is a target on a tree $T$ then there is a $D$-extremal configuration $C\in\cC^*$.
\end{lem}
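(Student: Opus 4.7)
The plan is to obtain this lemma as a near-immediate consequence of Corollary~\ref{c:SingleSource} together with Theorem~\ref{t:Trees}. Corollary~\ref{c:SingleSource} supplies a $D$-extremal configuration $C$ and a leaf $v$ with $C(v) = \max C$ for which the $(C+v, D)$-solution $\s$ satisfies $|\So(\s)| = 1$; my goal is to show that this particular $C$ lies in $\cC^*$.

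First I would pin down the identity of the single source of $\s$. If the source were some vertex $u \neq v$, then no pebble from $v$ would be used and $\s$ would already be a $(C,D)$-solution, contradicting the fact that $C$ is $D$-unsolvable. Thus the source of $\s$ must be $v$, and $\s$ uses all $C(v)+1$ pebbles stacked at $v$ to solve $D$. Equivalently, the stacked configuration $C_v^+$ is $D$-solvable, which is exactly the definition of $v$ being a superstack in $C$.

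Next I would assemble the membership $C \in \cC^*$. Theorem~\ref{t:Trees} gives $\supp(C) \sse L(T)$, so $C \in \cC$; and since $C$ trivially contains its own restriction $C_v$ at $v$, the configuration $C$ itself is a legitimate choice of the superstack configuration $C_v^*$. Hence $C \in \cC^*$, which is exactly what needs to be shown.

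The heavy lifting for this lemma has already been carried out in the No-Merging Lemma~\ref{l:NoMerge} and its corollary, and so I anticipate no new obstacle; the only substantive step is the identification of the single source of $\s$ as $v$, after which the remainder is a translation into superstack language. The one point I would double-check is that Corollary~\ref{c:SingleSource} is applied to exactly the $D$-extremal configuration we need (and not merely to some configuration that is $D$-unsolvable), so that the resulting $C$ is genuinely $D$-extremal and not only $D$-unsolvable with $\supp(C)\sse L(T)$.
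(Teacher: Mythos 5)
Your proof is correct and follows essentially the same route as the paper's: invoke Corollary~\ref{c:SingleSource} to get a $D$-extremal $C$ and leaf $v$ with a single-source $(C+v,D)$-solution, conclude $v$ is a superstack, and use Theorem~\ref{t:Trees} to place $\supp(C)\sse L(T)$ so that $C\in\cC^*$. Your explicit argument that the single source must be $v$ (else $\s$ would already solve $D$ from $C$ alone) is a detail the paper leaves implicit, and it is a worthwhile addition.
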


\begin{proof}
For a target in $D$ we can find a $D$-extremal configuration $C$ and a vertex $v$ with $(C+v,D)$-solution $\s$ with $|\So(\s)|=1$ by Corollary \ref{c:SingleSource}; that is, vertex $v$ is a superstack. 
By Theorem \ref{t:Trees} $\supp(C) \subseteq L(T)$.
Hence, $C\in\cC^*$.
\end{proof}

Let $f(T,D)=\max_{v \in L(T)}{|C_v^*|}+1$.
The following corollary follows immediately from Lemma \ref{c:SingleSource}.

\begin{cor}
\label{c:TargetPebbTree}
Suppose that $D$ is a target on the tree $T$.
Then $\pi(T,D)=f(T,D)$.
\hfill$\Box$
\end{cor}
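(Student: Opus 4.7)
The plan is to argue that $f(T,D)-1$ is both an upper bound on and a lower bound on the size of a $D$-extremal configuration, so that $\pi(T,D)=f(T,D)$ follows from the definition of the pebbling number as one more than the size of an extremal configuration.

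For the lower bound, I would observe that for each leaf $v$ that is a superstack, the configuration $C_v^*$ belongs to $\cC$, hence is $D$-unsolvable by definition, so $|C_v^*|\le \pi(T,D)-1$. Taking the maximum over leaves then gives $f(T,D)-1\le \pi(T,D)-1$, i.e.\ $f(T,D)\le \pi(T,D)$.

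For the upper bound, I would invoke Lemma \ref{l:SuperStack}, which asserts the existence of a $D$-extremal configuration $C\in\cC^*$. By construction, $\cC^*$ is exactly the set of the superstack configurations $C_v^*$ as $v$ ranges over leaves, so $C=C_v^*$ for some leaf $v$. Since $C$ is $D$-extremal, $\pi(T,D)-1=|C|=|C_v^*|\le \max_{u\in L(T)}|C_u^*|=f(T,D)-1$, giving $\pi(T,D)\le f(T,D)$.

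Combining the two inequalities yields the equality. The content is really packaged in Lemma \ref{l:SuperStack} (and through it, Theorem \ref{t:Trees}, Corollary \ref{c:SingleSource}, and the No-Merging Lemma); the corollary itself is just the translation of that structural existence result into a formula. The only delicate point is to make sure the maximum in $f(T,D)$ is taken over leaves that actually admit a superstack configuration (so that $C_v^*$ is well-defined); this is automatic because Lemma \ref{l:SuperStack} guarantees at least one such leaf exists, and the formula takes a maximum, so empty choices of $v$ do not harm the bound.
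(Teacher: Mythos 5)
Your proposal is correct and matches the paper's intent: the paper treats this corollary as immediate from Corollary \ref{c:SingleSource} via Lemma \ref{l:SuperStack}, and your two-inequality argument (each $C_v^*\in\cC$ is $D$-unsolvable for the lower bound; a $D$-extremal $C\in\cC^*$ from Lemma \ref{l:SuperStack} for the upper bound) is exactly that immediate deduction spelled out.
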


Now we discuss a method for calculating $f(T,D)$.
Let $T(D)$ be the convex hull of $D$; that is, it is the smallest subtree of $T$ that contains $D$.
Let $V'=L(T(D))=\{v_1,\ldots,v_j\}$.
For each $i$, $1 \leq i \leq j$, let $d'_i=\sum_{v\in D}D(v)2^{\dist(v_i,v)}$.
This equals the minimum number of pebbles placed on $v_i$ that solves $D$.
Let $T_i^-$ be the union of components of $T-v_i$ that are disjoint from $T(D)$, and set $T_i=T_i^-+v_i$.
For each $i$ let $\cP_i$ denote a maximum path partition of the rooted tree $(T_i,v_i)$, $C_i$ denote its corresponding Chung configuration, and $w_i$ be the leaf vertex opposite $v_i$ in the longest path of $\cP_i$.
Then there is some $i$ such that $|C_{w_i}^*|+1=f(T,D)$.
Hence $C_{w_i}^*=C_i-w_i^{C_i(w_i)}+w_i^{C_{w_i}^+(w_i)}$.

\begin{proof}[Proof of Theorem \ref{t:Algo}]
Choose any $v\in\dD$ and run breadth-first search from $v$ on $T$ to find the shortest paths from $v$ to all other $u\in\dD$; the union of those paths equals $T(D)$, whose leaves equal $V'=\{v_1,\ldots,v_j\}$, for some $j\le s(D)$.
This can be done in $O(n)$ steps.
For each $i\le j$, run Dijkstra's algorithm on $T(D)$ to find the distances $\dist(u,v_i)$ for all $u\in V'$; this calculates $d'_i$ for each $i$ and takes $O(\ed{s(D)\cdot n})$ steps.
The construction of all path partitions takes $O(n)$ steps (see \cite{BCCMW}), and then finding the maximum in the definition of $f(T,D)$ takes $j\le s(D)$ steps.
Hence the entire process takes $O(\ed{s(D)\cdot n})$ steps.
\end{proof}


\section{Conclusion}
\label{s:Conclusion}

In light of our results for general targets on trees, we propose expanding Conjecture \ref{c:PolyChordal} to the following.

\begin{cnj}
\label{c:PolyChordalD}
If $G$ is a pyramid-free chordal graph, and $D$ is any target, then $\p(G,D)$ can be calculated in polynomial time.
\end{cnj}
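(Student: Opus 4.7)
The plan is to attack Conjecture \ref{c:PolyChordalD} by generalizing the three-step structural pipeline developed here for trees. First I would show that every $D$-extremal configuration on a pyramid-free chordal graph $G$ can be taken to be supported on simplicial vertices, strengthening Theorem \ref{t:Simplicial} to arbitrary targets and thereby proving Conjecture \ref{c:Simplicial} in the pyramid-free case. Then I would establish a superstack-type structural result analogous to Lemma \ref{l:SuperStack}, and finally translate the resulting formula, in the spirit of Corollary \ref{c:TargetPebbTree}, into a polynomial-time algorithm along the lines of Theorem \ref{t:Algo}.

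For the first step, I would work relative to a perfect elimination ordering of $G$ and attempt to prove a push lemma: for any extremal configuration $C$ and any non-simplicial vertex $v$ with $C(v)>0$, a pebble can be shifted toward a simplicial neighbor without producing a $D$-solvable configuration, yielding a new extremal configuration that majorizes the original in the sense of Fact \ref{f:Lex}. The proof of Theorem \ref{t:Trees} combines such local pushes with an inductive decomposition at a split vertex; a natural analog in the chordal case is a path of maximal cliques in a clique-tree representation of $G$ ending at a simplicial clique, which pyramid-freeness should constrain enough to admit the decomposition argument.

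For the second step, the bushes of Corollary \ref{c:SingleSource} would be replaced by subgraphs rooted at maximal cliques along a clique-tree path. The convex hull $T(D)$ has a natural analog: the smallest connected subgraph of $G$ whose vertex set contains $\supp(D)$, which corresponds to a subtree of the clique tree. The hoped-for conclusion is that every extremal configuration is dominated by a single stack at a simplicial vertex, with the remaining pebbles one short of solving a suitable induced target on the rest of the graph, exactly as in the tree case where a Chung-type configuration plays this role.

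The hardest step, I expect, will be the no-merging argument. In a tree, the lexicographic exchange of Lemma \ref{l:NoMerge} works because pebbles moved toward a designated leaf travel along a unique path, so the solution structure is rigid. In a chordal graph there may be multiple short paths between simplicial vertices, and redistributing pebbles can alter solvability in non-monotone ways. Pyramid-freeness should play its decisive role here, since the pyramid is plausibly the minimal forbidden chordal structure permitting certain ``crossing'' paths that would invalidate the majorization exchange; ruling it out should force enough tree-like behavior for the argument to go through. Once the superstack lemma is in hand, the algorithm follows: enumerate each of the $O(n)$ simplicial vertices $v$, compute the maximum $D$-unsolvable superstack at $v$ via a clique-tree variant of the path-partition algorithm, and take the maximum, giving a total running time polynomial in $n$ and $s(D)$.
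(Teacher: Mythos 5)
The statement you are addressing is not a theorem of the paper but a conjecture: the paper states Conjecture \ref{c:PolyChordalD} in its conclusion precisely because no proof is known, and it offers only Proposition \ref{p:Observation} plus the open Problems \ref{p:NoMerge} and \ref{p:Stacking} as possible stepping stones. Your proposal is, in essence, the same roadmap the authors themselves suggest (push extremal configurations onto simplicial vertices, prove a no-merging/superstack analogue, then extract an algorithm), but it contains no actual arguments, so it cannot be regarded as a proof; every one of its three stages is an open problem. In particular, your first step is exactly Conjecture \ref{c:Simplicial} restricted to pyramid-free chordal graphs, which the paper proves only for trees (Theorem \ref{t:Trees}); the ``push lemma'' you posit is genuinely the content of Theorem \ref{t:Trees}'s proof, and that proof leans on tree-specific facts --- the No-Cycle Lemma forcing greedy solutions along the unique path to the target, and the leaf-split decomposition $T'=T-P'$ --- none of which you show survive in a clique-tree setting, where a pebble at a non-simplicial vertex can serve targets along several internally disjoint routes and shifting it toward one simplicial neighbor can create solvability.

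The deeper gap is in the second and third stages. The exchange argument of Lemma \ref{l:NoMerge} and the convexity argument of Lemma \ref{l:Unimodal}/Corollary \ref{c:TargetPebbPath} both rest on the fact that in a tree the cost of serving a target from a stack is exactly $\sum 2^{\dist}$ along unique paths, which is what makes the majorization exchange of Fact \ref{f:Lex} preserve unsolvability and makes the stack sizes $f_j$ unimodal; in a chordal graph with multiple shortest or near-shortest paths neither claim is justified, and your appeal to pyramid-freeness as ``plausibly'' forcing tree-like behavior is a hope, not an argument (note that Conjecture \ref{c:PolyChordal} is open even for $D$ a single vertex, so the difficulty is not created by general targets). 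Finally, even granting a superstack lemma, your algorithmic step assumes that the maximum $D$-unsolvable superstack at a simplicial vertex can be computed by ``a clique-tree variant of the path-partition algorithm''; in trees this is Chung's construction, but for chordal graphs deciding unsolvability of a configuration is not known to be polynomial (solvability testing is {\sf NP}-complete already for some restricted classes), so this computation is itself part of what must be proved. In short, your plan is a reasonable restatement of the paper's own open research program, but each of its steps is missing the idea that would make it work.
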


Along these lines, we offer the following proposition that may be of use in pursuit of this conjecture.

\begin{prp}
\label{p:Observation}
If $G$ is chordal and $C$ is maximal $D$-unsolvable then $S(G)-\supp(D)\sse \supp(C)$.
\end{prp}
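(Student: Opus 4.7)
\emph{Proof plan.} Suppose for contradiction that some $v\in S(G)\setminus\supp(D)$ has $C(v)=0$. Because $C$ is $D$-maximal unsolvable, $C+v$ is $D$-solvable; fix a minimal $(C+v,D)$-solution $\sigma$ and let $a,b$ be the numbers of pebbling steps into $v$ and out of $v$ in $\sigma$. Since $v\notin\supp(D)$, the final pebble count at $v$ in $\sigma$ must be zero: otherwise some in-step whose contributed pebble is never consumed could be removed, contradicting minimality. This gives the balance $1+a-2b=0$, i.e., $a=2b-1$. If $b=0$ then $a=-1$ is impossible; equivalently, if $\sigma$ contains no $v$-step then $\sigma$ is itself a $(C,D)$-solution, contradicting the $D$-unsolvability of $C$. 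So $b\ge 1$ and $a\ge 1$. A routine extension of Lemma~\ref{l:NCL} (deleting the steps of any directed cycle adds one pebble at each cycle vertex, preserving any $D$-solution) shows that $G_\sigma$ is acyclic even for a multi-target minimal $\sigma$; in particular the set $U$ of in-sources of $v$ and the set $W$ of out-destinations of $v$ are disjoint subsets of $N(v)$, since any $u\in U\cap W$ would yield a $2$-cycle $u\to v\to u$ in $G_\sigma$.

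\emph{Bypass construction.} I would construct a $(C,D)$-solution $\sigma'$ by deleting every $v$-step from $\sigma$ and, in the slot vacated by each $v$-out-step $v\to w_j$, inserting a direct pebbling step $u\to w_j$ for a suitably chosen $u\in U$. Such a step is legal: $v\in S(G)$ makes $N(v)$ a clique, so $u$ is adjacent to $w_j$, and $u\ne w_j$ by the disjointness of $U$ and $W$. To choose $u$, maintain a freed-pebble pool in which each deleted in-step returns two pebbles to its source in $U$, and each inserted direct step removes two pebbles from its chosen $u$. The pool is nonempty when needed because the $v$-balance forces at least $2j-1$ in-steps of $\sigma$ to occur before the $j$-th out-step; so just before the $j$-th insertion the pool contains at least $2(2j-1)-2(j-1)=2j\ge 2$ pebbles, organised in unit-pairs at vertices of $U$, and one $u\in U$ with two available pebbles can be picked.

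\emph{Verification and main obstacle.} Placing each direct step in the exact time slot of its parent out-step makes the $w_j$-counts of $\sigma$ and $\sigma'$ identical at every time, while the $u_i$-counts of $\sigma'$ dominate those of $\sigma$ because skipped in-steps return pebbles that the inserted direct steps have not yet (by the pool bound) consumed. Hence every non-$v$ step of $\sigma$ remains executable in $\sigma'$, the final configuration of $\sigma'$ dominates that of $\sigma$ componentwise and so satisfies $D$, and $C$ is $D$-solvable---the desired contradiction. \emph{The main obstacle} is exactly this bookkeeping: verifying the pool invariant $d_i(t)\le k_i(t)$ on the cumulative counts of inserted direct steps from $u_i$ versus deleted in-steps at $u_i$, and dominance of $\sigma'$ over $\sigma$ at every vertex and every time step, under arbitrary interleavings of $v$-steps with non-$v$ steps in $\sigma$. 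Once this invariant is in hand, the rest of the argument is routine.
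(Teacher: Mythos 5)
Your core idea is the same as the paper's: since $v$ is simplicial, $N(v)$ is a clique, so any traffic through $v$ can be rerouted as direct steps between neighbors of $v$, with acyclicity/minimality guaranteeing the rerouted step is not a loop. Where you differ is in scale: you bypass \emph{every} $v$-step of $\sigma$ at once, which forces the freed-pebble pool invariant that you yourself flag as the main obstacle, whereas the paper's proof removes a \emph{single} pair $x\mapsto v\mapsto y$ and inserts the one direct step $x\mapsto y$, and this already produces a $(C,D)$-solution. Your obstacle dissolves if you adopt that economy: take the first out-step $v\mapsto y$ in $\sigma$ and any in-step $x\mapsto v$ occurring before it (one exists, since $v$ starts with only the single added pebble); deleting these two steps and inserting $x\mapsto y$ in the slot of the deleted out-step leaves the pebble count at $v$ from that point on exactly equal to its original trajectory (you lose the added pebble and one incoming pebble but save the two spent by the deleted out-step), so all later out-steps from $v$, and every other step, remain executable, and $y$ still receives its pebble from within $N(v)$. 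In particular your parity computation $1+a-2b=0$ (and the minimality argument needed to justify that the final count at $v$ is zero) is unnecessary; all you need is $b\ge 1$, which you already get from the $D$-unsolvability of $C$, plus $x\ne y$ from the No-Cycle Lemma. Your full-bypass argument does appear completable as sketched --- the bound $a_j\ge 2j-1$ before the $j$-th out-step and the pair-accounting at vertices of $U$ are sound --- but it buys nothing over the single swap for this proposition.
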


\begin{proof}
If some $v\in S(G)-\supp(D)$ has $C(v)=0$ then define the configuration $C'$ by $C'(v)=1$ and $C'(u)=C(u)$ for all other $u$.
Because $C$ is $D$-maximal, $C'$ solves $D$; let $\s'$ be a $(C',D)$-solution, which we assume to be acyclic by the No-Cycle Lemma.
Because $C$ is $D$-unsolvable, $\s'$ contains a step $v\mapsto y$, for some $y$, which requires $\s'$ to also contain a step $x\mapsto v$.
Because $\s'$ is acyclic, $x\not=y$.
Now define $\s = \s' - (x\mapsto v\mapsto y) + (x\mapsto y)$ and observe that $\s$ is a $(C,D)$-solution, which is a contradiction.
\end{proof}

Finally, in support of continued pursuit of Conjectures \ref{c:Simplicial}, \ref{c:WTC}, and \ref{c:STC}, we offer some open problems that may aid progress towards this goal.

\begin{prb}
\label{p:NoMerge}
Prove a No-Merging Lemma for specific families of chordal graphs such as split graphs, interval graphs, $k$-trees, etc.
\end{prb}

\begin{prb}
\label{p:Stacking}
Generalize Lemma \ref{l:SuperStack} to specific families of chordal graphs such as split graphs, interval graphs, $k$-trees, etc.
\end{prb}

\bibliographystyle{acm}
\bibliography{refs}

\end{document}